\documentclass[%
  onecolumn,
]{mpi2015-cscpreprint}

\usepackage{amsmath, amscd, amssymb, amsthm, mathrsfs, mathtools, bm}
\usepackage{array, booktabs, makecell} 
\newcolumntype{P}[1]{>{\centering\arraybackslash}p{#1}}
\newcolumntype{M}[1]{>{\centering\arraybackslash}m{#1}}
\usepackage[american]{babel}
\usepackage{enumerate}
\usepackage{graphicx}
\usepackage{hyperref}
\usepackage{soul}
\usepackage{tikz}
\usepackage[backgroundcolor=violet!60,textcolor=white,linecolor=gray]{todonotes}
\usepackage{xspace} 
\usepackage{multirow}
\usepackage{adjustbox}

\usepackage[dvipsnames]{xcolor}
\usepackage[mathscr]{euscript}  

\usepackage[algosection,vlined,algo2e,linesnumbered,algoruled]{algorithm2e}
\SetKwComment{Comment}{}{}
\SetKwProg{Fn}{Function}{}{end}
\DontPrintSemicolon{}

\SetKwInput{KwParameters}{Parameters}
\newcommand{\SComment}[1]{{\smaller{\tcp*[r]{#1}}}}

\numberwithin{equation}{section}
\numberwithin{table}{section}
\numberwithin{figure}{section}

\theoremstyle{plain}
    \newtheorem{theorem}{Theorem}
    
    \newtheorem{lemma}[theorem]{Lemma}
    
\theoremstyle{definition}

\theoremstyle{remark}

\numberwithin{theorem}{section}

\DeclareMathOperator{\diag}{diag}
\DeclareMathOperator{\sign}{sign}   

\newcommand{\R}{\mathbb{R}}   
\newcommand{\bigO}[1]{\mathcal{O}\left(#1\right)}       

\newcommand{\norm}[1]{\left\lVert#1\right\rVert}

\DeclareMathOperator{\fl}{\texttt{fl}}

\newcommand{\wh}[1]{\widehat{#1}}

\newcommand{\uhigh}{u_{h}}
\newcommand{\ulow}{u_{\ell}}

\definecolor{plotblue}{RGB}{0, 113.9850, 188.9550}
\definecolor{plotred}{RGB}{216.7500, 82.8750, 24.9900}
\definecolor{plotpurple}{RGB}{125.9700, 46.9200, 141.7800}

\begin{document}


\title{Reduced and Mixed-Precision Block Householder QR Factorization}
 
\author[$\dagger$]{Peter Benner}
\affil[$\dagger$]{Computational Methods in Systems and Control Theory, Max Planck Institute for Dynamics of Complex Technical Systems, Sandtorstr.\ 1, 39106 Magdeburg, Germany\authorcr
	\email{benner@mpi-magdeburg.mpg.de}, \orcid{0000-0003-3362-4103}}

\author[$\dagger$]{Martin K\"{o}hler}
\affil[$\dagger$]{Computational Methods in Systems and Control Theory, Max Planck Institute for Dynamics of Complex Technical Systems, Sandtorstr.\ 1, 39106 Magdeburg, Germany\authorcr
	\email{koehlerm@mpi-magdeburg.mpg.de}, \orcid{0000-0003-2338-9904}}

\author[$\dagger$]{Xiaobo Liu}
\affil[$\dagger$]{Computational Methods in Systems and Control Theory, Max Planck Institute for Dynamics of Complex Technical Systems, Sandtorstr.\ 1, 39106 Magdeburg, Germany\authorcr
	\email{xliu@mpi-magdeburg.mpg.de}, \orcid{0000-0001-8470-8388}}

\author[$\dagger$]{Eda Oktay}
\affil[$\dagger$]{Computational Methods in Systems and Control Theory, Max Planck Institute for Dynamics of Complex Technical Systems, Sandtorstr.\ 1, 39106 Magdeburg, Germany\authorcr
    \email{oktay@mpi-magdeburg.mpg.de}, \orcid{0000-0003-0761-2184}}

\shortauthor{P. Benner, M. K\"{o}hler, X. Liu, E. Oktay}
  
\keywords{mixed-precision, low precision, QR factorization, Householder
}

\msc{
	15A69, 65G50, 65Y05, 65Y20
}
  
\abstract{
    Recent advances in hardware architecture have highlighted the importance of reduced-precision arithmetic for improving performance and reducing energy consumption in large-scale scientific applications. However, uniformly low-precision methods can suffer from numerical instability, especially in fundamental linear algebra routines. Mixed-precision strategies therefore play an important role in balancing efficiency, stability, and accuracy. In this paper, we focus on the block Householder QR decomposition using the WY representation and study its behavior in reduced- and mixed-precision settings. Starting from a uniform half-precision variant, we show that performing norm computations with fp32 accumulation is necessary to avoid overflow and underflow on large matrices. Based on this observation, we introduce mpWYQR, a mixed-precision algorithm that increases the precision of only selected computations and avoids most of the precision conversions required by the existing mixed-precision blocked QR algorithm (mpBQR3) introduced in [L. M. Yang, A. Fox, and G. Sanders, \textit{SIAM J. Sci. Comp.}, 43.3 (2021), pp. A1723--A1753]. Our analysis shows that mpWYQR accuracy is governed by low-precision unit roundoff, and that its error bound exceeds that of mpBQR3 by only a constant factor, with identical heuristic bounds. In experiments across a range of matrix sizes and condition numbers, we show that mpWYQR retains mpBQR3 accuracy while running up to $18\times$ faster and using less memory. We also discuss the performance cost of computing inner products in higher precision, and the library support needed for an efficient GPU implementation.
}

\novelty{
    A mixed-precision Householder QR algorithm based on the WY representation, mpWYQR, is introduced, in which the precision of only selected computations is raised, and fewer precision conversions are required than in existing mixed-precision blocked QR algorithms. Its error bound differs from that of mpBQR3 only by a constant factor, and it achieves comparable accuracy experimentally with speedups of up to $18\times$. These results are important for using low-precision hardware in QR-based solvers in high-performance computing.
}

\maketitle

\section{Introduction}
The QR decomposition is a fundamental and versatile tool in numerical linear algebra, with various applications including least-squares problems, eigenvalue computations, and low-rank matrix approximations \cite{GolV13}. It is a key computational step in many scientific and engineering applications, where both accuracy and performance are critical. Among the various algorithms for computing the QR factorization, Householder QR \cite{Hou58} is widely used owing to its normwise backward stability, a desirable property in the presence of rounding errors. However, classical Householder QR can suffer from communication and synchronization bottlenecks on modern parallel architectures: the repeated global synchronizations entailed in the sequential construction of Householder reflectors limit its scalability, and the computational cost is \emph{cubic} in the matrix dimension. 
As problem sizes continue to grow in modern scientific applications, performing QR factorization entirely in high precision becomes computationally expensive, motivating more efficient alternatives.

To address these limitations, several improvements have been proposed over the years. Bischof and Van Loan introduced the WY representation of Householder transformations, enabling the use of BLAS3-level operations and improving data locality  \cite{BisV87}. Rotella and Zambettakis extended this idea through block Householder transformations, enabling more efficient parallel QR factorization \cite{RotZ99}. Demmel et al. further developed communication-avoiding, parallel, and stable QR algorithms based on Householder reflections, reducing the communication overhead while preserving numerical stability \cite{DemGHetal12}. More recently, Haidar et al. demonstrated the effectiveness of batched and GPU-resident Householder QR factorization, enabling high-performance computing on modern architectures \cite{HaiDTetal15}.

With the emergence of low-precision arithmetic, particularly half-precision (binary16, referred to as fp16)\cite{ieee19}, on modern hardware, both computational time and memory bandwidth requirements can be significantly reduced. Using low precision uniformly throughout the QR factorization nevertheless introduces significant numerical challenges. In particular, computing Householder reflectors in fp16 is prone to overflow and underflow when evaluating the $2$-norm of a vector, due to the limited dynamic range of fp16. Even when the range issue is avoided, the loss of orthogonality (LOO) in the computed Q-factor is in general bounded by the unit roundoff of fp16, which is about $4.8\times10^{-4}$, and achieving even this level of orthogonality reliably requires the use of higher precision in critical parts of the factorization.
While this accuracy is insufficient for many numerical linear algebra applications, some application settings may accept a Q-factor with fp16-level LOO. 
For instance, in certain machine learning applications such as Gaussian process training, half-precision iterative solvers that maintain orthogonality only at the level of the working precision have been shown to perform well in practice \cite{MadPW22}. Similarly, in randomized numerical linear algebra, where QR is applied to sketched matrices or used within randomized range-finding algorithms \cite{HMT11}, the inherent randomness already introduces errors of a comparable or larger magnitude, making fp16 arithmetic a reasonable choice. 
Furthermore, in signal processing applications such as adaptive beamforming \cite{RSVN01} and wireless multiple‐input multiple‐output precoding \cite{CHL19}, where the required application-level accuracy may tolerate reduced-precision arithmetic, a lower-precision QR factorization may be sufficient in practice.

Higham provided a rigorous rounding-error analysis of Householder QR in uniform precision, showing that, for $A\in\mathbb R^{m\times n}$, the worst-case backward error is of order $mn$ times the working-precision unit roundoff \cite{Hig02}. Building on these analyses, recent advances have improved mixed-precision variants of Householder QR. Yang et al. proposed a mixed-precision block Householder QR algorithm (mpBQR3), which achieves improved performance by performing selected operations in lower precision while attaining an LOO bound of order $2u_l + nu_h$, where $u_l$ and $u_h$ denote the lower- and higher-precision unit roundoffs, respectively \cite{YanFS21}.
More recently, Leng et al. developed a high-performance Householder QR factorization for NVIDIA GPUs that exploits Tensor Cores and reduced-precision arithmetic \cite{LZWW25}.
Connolly and Higham developed a probabilistic rounding-error analysis for Householder QR that yields a probabilistic bound proportional to $\sqrt{mn}u$ with high probability under the assumptions that rounding errors are mean-independent and of mean zero \cite{ConH23}. A comprehensive survey of mixed-precision methods in numerical linear algebra, including orthogonalization and QR factorization, can be found in \cite{HigM22}.

Beyond these direct use cases, low-precision QR factorization can also be desirable when a correction step can recover lost accuracy. For example, in iterative refinement methods, a low-precision QR factorization can be used to construct a preconditioner, with the accuracy subsequently improved through refinement iterations \cite{CarHP20}. Similarly, in mixed-precision Jacobi SVD algorithms, a low-precision QR factorization serves as a preprocessing step, followed by reorthogonalization using working-precision QR factorization and one-sided Jacobi sweeps \cite{GaoMS25}. Furthermore, in the solution of Sylvester matrix equations \cite{DmyFHetal25} and the computation of the matrix square root \cite{GaoKS26}, Schur-based initial approximations obtained from the low-precision QR-related kernels can be refined to working-precision accuracy. Finally, the QR-based CP-ALS algorithm may benefit from a stable QR factorization in half precision, given the satisfactory performance observed when switching the QR computation from double to single precision \cite[Fig.~5]{ZhaVV26}.

In this work, we propose a new mixed-precision variant of block Householder QR factorization, mpWYQR, to maximize low-precision computation without compromising overall factorization quality. Identifying the components of the algorithm most critical to accuracy, we examine which parts of the classical block approach can be computed at lower precision---such as inner products---and use higher precision to preserve numerical stability.  

The rest of the paper is organized as follows. We first provide background on floating-point arithmetic, rounding error analysis, and the block Householder QR decomposition with the WY representation. In Section~\ref{sec:variants}, we introduce our reduced- and mixed-precision block Householder QR variants, and in Section~\ref{sec:err}, we present their error analysis. We then present the numerical experiments in Section~\ref{sec:exp} and conclude the paper in Section~\ref{sec:conclusion}.
\section{Preliminaries}\label{sec:background}
This section introduces the concepts and methods necessary for the analysis and algorithms presented in the rest of the paper. We first review the basic concepts of floating-point arithmetic and finite-precision error analysis, which are then generalized to facilitate our mixed-precision analysis. Next, we describe the Householder QR factorization and discuss how blocking and the WY representation have been employed to improve its performance in scientific computing. Finally, we review the mixed-precision block Householder QR algorithm mpBQR3 proposed in \cite{YanFS21} and discuss the limitations that motivate the present work.

\subsection{Floating-point arithmetic and error analysis}\label{sec:fp-error-analysis}
Floating-point operations satisfy the standard model~\cite[sect.~2.2]{Hig02}
\[
\fl(a \circ b) = (a \circ b)(1+\delta), \qquad |\delta| \le u,
\]
with $u$ denoting the working precision of the operation; in this paper $u = \uhigh$ or $\ulow$ and $\fl_h$ or $\fl_l$, depending on the precision used, and we particularly focus on the precision combination $(\uhigh,\ulow) =(\mbox{fp32}, \mbox{fp16})$. 

The standard product lemma~\cite[Lem.~3.1]{Hig02} expresses the accumulation of rounding errors in uniform precision as
\[
 \prod_{i=1}^{k}(1+\delta_i)^{\rho_i}=1+\theta_k,
 \qquad |\theta_k|\le\gamma_k:=\frac{ku}{1-ku},
\]
where $|\delta_i|\le u$, $\rho_i\in\{-1,1\}$, and $ku<1$; see also~\cite[Chap.~1]{Wilk23}.
To distinguish the contributions of the two precisions and analyze rounding errors in mixed precision for mpWYQR later in Section~\ref{sec:err}, we extend this notation by defining, for nonnegative integers $p,q$,
\[
1+\theta_{p,q}
   := \prod_{i=1}^{p}(1+\delta_i^{h})^{\rho_i}
      \prod_{j=1}^{q}(1+\delta_j^{\ell})^{\tau_j},
\quad
|\delta_i^{h}|\le \uhigh,\quad
|\delta_j^{\ell}|\le u_\ell,\quad
\rho_i,\tau_j\in\{-1,1\}.
\]
The notation satisfies
\[
(1+\theta_{p,q})(1+\theta_{r,s}) = 1+\theta_{p+r,q+s},
\quad
(1+\theta_{p,q})^{-1} = 1-\frac{\theta_{p,q}}{1+\theta_{p,q}}
= 1+\theta'_{p,q},
\]
where $\theta'_{p,q}$ denotes a possibly different quantity of the same form as $\theta_{p,q}$. 
We omit the prime in subsequent calculations, in accordance with the convention that each occurrence may have a different value.
With $a=(1-\uhigh)^p(1-\ulow)^q\in(0,1]$, it follows that
$a\le1+\theta_{p,q}\le a^{-1}$. Since $1-a\le a^{-1}-1$, we obtain
\begin{equation}\label{eq:eta-all-orders}
 \eta_{p,q}:=(1-\uhigh)^{-p}(1-\ulow)^{-q}-1,
 \qquad
 |\theta_{p,q}|\le \eta_{p,q},
\end{equation}
Thus $\eta_{p,q}$ plays the same role for $\theta_{p,q}$ as $\gamma_k$ does for $\theta_k$.
The bound~\eqref{eq:eta-all-orders} retains all orders of unit roundoff and holds under the assumption $\uhigh,\ulow<1$.

Define 
\[
 \mu_{p,q}:=p\uhigh+q\ulow.
\]
If $\mu_{p,q}<1$, Bernoulli's inequality gives $(1-\uhigh)^p(1-\ulow)^q \ge1-\mu_{p,q}>0$. Taking reciprocals therefore yields
\[
 |\theta_{p,q}|\le\eta_{p,q}
 \le\frac{\mu_{p,q}}{1-\mu_{p,q}},
 \qquad \mu_{p,q}<1,
\]
where rightmost upper bound reduces to $\gamma_{p+q}$ when $\uhigh=\ulow=u$.
Furthermore, $(1-u)^{-1}\ge1+u$ implies $\eta_{p,q}\ge\mu_{p,q}$, so the preceding bound gives
\[
 0\le\eta_{p,q}-\mu_{p,q}
 \le\frac{\mu_{p,q}^{2}}{1-\mu_{p,q}},
 \qquad \mu_{p,q}<1.
\]
Consequently,
\[
 \eta_{p,q}=\mu_{p,q}+O(\mu_{p,q}^{2})
 \quad\text{as}\quad \mu_{p,q}\to0.
\]
The first-order estimate is useful only when $\mu_{p,q}\ll1$, just as the first-order approximation $\gamma_k \approx ku$ requires $ku\ll1$ in uniform precision.
We will also use
\[
 \widetilde\gamma_k^{(s)}
 :=\frac{c k u_s}{1-c k u_s},
 \qquad s\in\{h,\ell\},
\]
where $c$ is the modest positive constant and $c k u_s<1$.  

\subsection{Block Householder QR with WY representation}
The classical Householder QR algorithm factorizes an $m \times n$ matrix $A$ into an orthogonal matrix $Q$ and an upper triangular matrix $R$ by successively applying Householder reflections. At each step $k$, a Householder reflector $H_k = I - \tau_k v_k v_k^T$ zeros out the subdiagonal entries of the $k$-th column of $A$, where $v_k$ is the Householder vector and $\tau_k$ is the corresponding scalar. The reflector is then applied to the remaining columns of the matrix. After $n$ steps, the product $Q = H_1 H_2 \cdots H_n$ yields the orthogonal factor. While this approach is unconditionally backward stable, it relies heavily on BLAS2-level operations, which limits performance on modern architectures because of the large number of synchronizations and the inability to exploit cache efficiently.

The WY representation, introduced in \cite{BisV87}, reformulates the product of $k$ Householder reflectors as $I + WY^T$, where $W$ and $Y$ are $m \times k$ matrices. This representation allows the accumulated reflectors to be applied to a matrix block via (triangular) matrix-matrix products, namely TRMM and GEMM, rather than a sequence of BLAS2 operations. Since it involves more BLAS3 operations than the classical Householder QR approach, the WY representation is more advantageous for HPC applications.

To further improve performance, the algorithm is organized into blocks of size $b$. At a given block step, the next $b$ columns form a panel. The panel factorization constructs the corresponding $b$ Householder reflectors and accumulates them into a WY representation. These accumulated reflectors are then applied simultaneously to the remaining columns, called the trailing matrix. Compared with applying a single reflector at a time, the block formulation improves cache utilization by leveraging GEMM operations, providing better performance on large-scale problems. Algorithm \ref{alg:householder} presents the pseudocode for the block Householder QR approach with a WY representation. For more detailed algorithm, see \cite[Sec. 5]{GolV13}.

\begin{algorithm2e}
\caption{Block Householder QR with WY representation}
\label{alg:householder}
\KwIn{$A\in\mathbb{R}^{m\times n}$, $m\ge n$,
       and block size $b$.}
\KwOut{$R\in\mathbb{R}^{n\times n}$ and
        panel WY factors $\{(W_j,Y_j)\}_{j=1}^{s}$,
        where $s=\lceil n/b\rceil$.}

$j\gets 0$\;
\For{$k=1:b:n$}{
    $j\gets j+1$\;
    $r\gets\min(b,n-k+1)$\;
    $\ell\gets k+r-1$\;

    Factor the panel $A[k:m,k:\ell]$ by Householder QR, storing the 
    Householder vectors in $V$ and the corresponding scalars in $\tau$\; \label{algline:panelQR}
    Form the WY factors $W_j$ and $Y_j$ from $(V,\tau)$ so that the product of the panel reflectors is $I+W_jY_j^T$\;
    
    \If{$\ell<n$}{
        $C\gets A[k:m,\ell+1:n]$\;
        $A[k:m,\ell+1:n]
           \gets C+Y_j(W_j^T C)$\;
    }
}
$R\gets\operatorname{triu}(A[1:n,1:n])$\;
\end{algorithm2e}

\subsubsection{mpBQR3}
The mixed-precision blocked QR algorithm \texttt{mpBQR3} introduced in \cite[Alg.~7]{YanFS21} is based on a blocked WY approach in which each panel is first cast into $\uhigh$ and then factorized by Householder QR to build the WY representation in precision $\uhigh$. The resulting panel matrices, including the Householder basis and the WY factor, are subsequently cast back to $\ulow$ so that the trailing matrix update can be performed by low-precision matrix--matrix operations with $\uhigh$ accumulation, as in Tensor Core-type block fused multiply--add operations \cite{BlaHLetal20}.
This design exploits the high throughput of low-precision BLAS3 kernels while retaining $\uhigh$ for the lower-order operations inside the panel factorization. However, repeated cast-up and cast-down operations introduce additional expensive conversion overhead and low-precision perturbations, which cause the attainable accuracy to remain governed by $\ulow$, despite extensive use of $\uhigh$ inside each panel.

\section{New Reduced/Mixed Precision Variants}\label{sec:variants}
Revising the strategy above, we first investigate whether Householder QR can run uniformly in $\ulow$ without overflow or underflow while improving performance. We then introduce a mixed precision approach, mpWYQR, achieving $\bigO{\ulow}$ accuracy in terms of residual and LOO, with substantially fewer $\uhigh$ inner products and less precision-conversion overhead.

Algorithm~\ref{alg:householder} is presented using the general WY representation introduced by Bischof and Van Loan~\cite{BisV87}. 
In practice, a more storage-efficient compact WY representation is commonly used~\cite{SchV89}; with the sign convention adopted in our implementation, $Y=V$ and $W=YT$, so that only $V$ and the small triangular factor $T$ need to be stored. 
We retain the more general $W,Y$ notation in Algorithm~\ref{alg:householder} to keep the presentation independent of this particular implementation.

\subsection{Reduced precision variant (rWYQR)}
A natural starting point for a low-precision block Householder QR algorithm is to adapt the existing LAPACK \cite{Lapack} QR routines, \texttt{geqr2}, \texttt{geqrf}, and \texttt{geqrt}(\texttt{2},\texttt{3}), to operate in $\ulow$. However, a difficulty arises in the norm computation required for Householder reflector construction in line \ref{algline:panelQR} of Algorithm \ref{alg:householder}. 
LAPACK evaluates the Euclidean norm using a carefully scaled \texttt{nrm2} implementation to reduce the risk of intermediate overflow and underflow. Nevertheless, when the entire computation is carried out in a low precision such as fp16, the limited dynamic range still poses a numerical bottleneck for the norm evaluation. 
This motivates a more careful treatment of norm computation in the $\ulow$ setting.

To compute $\norm{x}_2$, we first evaluate the squared norm of the tail, $s = x_{2:n}^T x_{2:n} = \norm{x_{2:n}}_2^2$, and then form $\norm{x}_2 = \sqrt{x_1^2 + s}$ \cite{GolV13}. Thus, we avoid unnecessarily computing the intermediate quantity $\norm{x_{2:n}}_2 = \sqrt{s}$, saving one square-root operation and reducing the overflow and underflow risk.
After implementing the revised approach, we first use $\ulow$ throughout Algorithm \ref{alg:householder}, call it the rWYQR approach, and compare it with the \texttt{geqrf} routine in $\ulow$ for fp16. Table \ref{tab:rand3040k} shows the effects of norm scaling on time, LOO, and residuals. The test matrices $A \in \mathbb{R}^{m \times n}$ are generated with uniformly randomly distributed elements from the range $[-1, 1]$. All matrices are scaled from the right by the largest absolute value of each column to prevent overflow and/or underflow. We use a fixed block size of $n_b = 64$ throughout all experiments. We compare the reduced-precision variant with the proposed modification on norm computation, rWYQR, against the LAPACK routine for block Householder QR factorization analogous to mpBQR3, \texttt{geqrf}, in terms of LOO, $\|Q^TQ - I\|_F$, residual, $\|A - QR\|_F$, and runtime.

We observe that the residual and LOO are still very large, indicating that uniform fp16 arithmetic is insufficient for reliable factorization at these dimensions. However, norm scaling still yields more than a $\times 13$ speedup for a $30000\times 128$ matrix. When we increase the large dimension to $40000\times 100$, we still observe order $10^0$ error even with norm scaling, while still achieving more than $\times 12$ speedup. This result shows that with the introduced modification, reduced precision accelerates the computation of the Householder reflectors. However, while norm scaling is sufficient to prevent overflow, it is not sufficient to maintain full numerical accuracy to order $\ulow$.

\begin{table}[h]
\centering
\footnotesize
\begin{adjustbox}{max width=\textwidth}
\begin{tabular}{c c l l r}
\toprule
$(m,n)$ & Method & $||A-QR||_F$ & $||Q^TQ-I||_F$  & Time (s) \\
\midrule
\multirow{2}{*}{$(30000, 128)$}  & geqrf &  6.48926E-01 & \textbf{2.60498E-01} & 0.78886  \\
                        & rWYQR & \textbf{6.48438E-01} & \textbf{2.60498E-01} & \textbf{0.05832} \\

\midrule
\multirow{2}{*}{$(40000, 100)$}  & geqrf &  \textbf{2.54883E+00} & \textbf{2.75781E+00} & 0.68615  \\
                        & rWYQR &  2.80078E+00 & 3.39062E+00 & \textbf{0.05654} \\
\bottomrule
\end{tabular}
\end{adjustbox}
\vspace{0.1cm}
\caption{Performance and accuracy results for $A\in \mathbb{R}^{(m,n)}$ for various $(m,n)$ settings.}
\label{tab:rand3040k}
\end{table}

Motivated by the numerical experiments, we introduce a mixed-precision variant of the Householder construction given in Algorithm \ref{alg:householder-ref}. Since the two norm evaluations are the critical operations in $\ulow$, we perform both in $\uhigh$ and use $\ulow$ for the remaining operations. We choose the sign of the leading entry of the transformed vector according to the standard convention~\cite[eq.~(19.1)]{Hig02}. To show how the precision used for norm evaluations affects the accuracy of the algorithm, we run numerical experiments in two configurations. The first configuration is \texttt{rWYQR}, as explained above, where $\ulow = \uhigh$. In the second configuration, denoted \texttt{mpWYQR}, we implement Algorithm \ref{alg:householder-ref} with $\uhigh < \ulow$, corresponding to the mixed-precision implementation presented in Section \ref{sec:mpwyqr}.

\begin{algorithm2e}[h]
\caption{Mixed-precision Householder reflector construction}
\label{alg:householder-ref}
\KwIn{$x \in \mathbb{R}^m$}
\KwOut{$v\in \mathbb{R}^m$, $\beta\in \mathbb{R}$}
    $\alpha = -\sign(x_1) \cdot \norm{x}_2$\SComment{norm in $\uhigh$, result converted to $\ulow$}
    $v = (x-\alpha e_1) / \norm{x-\alpha e_1}_2$\SComment{norm in $\uhigh$, subtraction and division in $\ulow$}
    $\beta = 2 / (v^Tv)$\SComment{$\ulow$} \label{algline:beta}
\end{algorithm2e}

To investigate how the accuracy of the Householder reflector construction affects the overall QR factorization, we run experiments on square, dense random matrices of various sizes. Table \ref{tab:rand_sq_v32b32} shows relative errors in $v$ and $\beta$, computed using fp16 for the low-precision operations, against $v_{fp32}$ and $\beta_{fp32}$ computed in fp32. 
Although mpBQR3 gives more accurate $v$ and $\beta$ in most cases, the overall factorization errors remain similar. For instance, while the reported relative errors in $v$ for mpBQR3 and mpWYQR differ by $0.1135$ for $(m,n)=(4000,4000)$, their LOO and QR errors differ by at most $2\times10^{-4}$, which is smaller than one fp16 unit roundoff, $4.88\times10^{-4}$.
We anticipate that the reason for this behavior is that, during the calculation of $\beta$ in line \ref{algline:beta} of Algorithm \ref{alg:householder-ref}, the inaccuracy from the computation of $v_k$ from the previous line can be subject to cancellation due to the inner product computation $v_k^Tv_k$. This shows that using high precision for every inner product, as in mpBQR3, is unnecessary; instead, one can use $\uhigh$ to compute the norm, which can significantly improve performance. The importance of using $\uhigh$ for norm computation is also evident in the difference between the results of rWYQR and mpWYQR. We see that rWYQR cannot recover inaccuracies in computing $v$ and $\beta$, leading to highly inaccurate QR factors. The dashed entries in rWYQR show that the norm evaluations in $\ulow$ can easily overflow for large problems.

\begin{table}[h]
\centering \footnotesize
\begin{adjustbox}{max width=\textwidth}
\begin{tabular}{c c l l l l r}
\toprule
$(m,n)$ & Method & $\|A-QR\|_F$ & $\|Q^TQ-I\|_F$ & $\dfrac{\|v_{fp32}-v\|}{\|v_{fp32}\|}$ & $\dfrac{\|\beta_{fp32}-\beta\|}{\|\beta_{fp32}\|}$ & Time (s) \\
\midrule
\multirow{3}{*}{$(4000, 4000)$}   & rWYQR  & 8.55713E-02 & 9.95483E-02 & 0.67257 & 0.03590 & 0.33285 \\
                                  & mpWYQR & 5.99670E-03 & 7.92694E-03 & 0.63347 & 0.03217 & \textbf{0.31996} \\
                                  & mpBQR3 & \textbf{5.90515E-03} & \textbf{7.70950E-03} & \textbf{0.51997} & \textbf{0.02832} & 0.56634 \\
\midrule
\multirow{3}{*}{$(12000, 12000)$} & rWYQR  & 5.42969E-01 & 6.17676E-01 & 0.81345 & 0.02097 & 1.60879 \\
                                  & mpWYQR & 5.65338E-03 & 7.54547E-03 & \textbf{0.67193} & \textbf{0.01851} & \textbf{1.50179} \\
                                  & mpBQR3 & \textbf{5.54276E-03} & \textbf{7.40814E-03} & 0.68853 & 0.01968 & 5.09183 \\
\midrule
\multirow{3}{*}{$(36000, 36000)$} & rWYQR  & -- & -- & -- & -- & -- \\
                                  & mpWYQR & 3.51143E-03 & 5.52368E-03 & \textbf{0.72925} & 0.01332 & \textbf{30.14354} \\
                                  & mpBQR3 & \textbf{3.34358E-03} & \textbf{5.51224E-03} & 0.74434 & \textbf{0.01301} & 58.15787 \\
\midrule
\multirow{3}{*}{$(42000, 42000)$} & rWYQR  & -- & -- & -- & -- & -- \\
                                  & mpWYQR & \textbf{3.00980E-03} & 5.22614E-03 & \textbf{0.73835} & \textbf{0.01162} & \textbf{57.78656} \\
                                  & mpBQR3 & 3.15475E-03 & \textbf{5.15747E-03} & 0.74534 & 0.01252 & 79.60925 \\
\bottomrule
\end{tabular}
\end{adjustbox}
\caption{Performance and accuracy results for $A\in\mathbb{R}^{m\times n}$, where $m=n$.}
\label{tab:rand_sq_v32b32}
\end{table}

The numerical experiments show that the uniform $\ulow$ implementation achieves lower accuracy than $\bigO{u_{\text{fp16}}}$, indicating that norm evaluations in $\ulow$ introduce rounding errors that degrade the quality of the Householder reflectors. 

Error analysis in Section \ref{sec:err} shows that computing the norm in $\uhigh$ rather than $\ulow$ only decreases the constant in the error bound, while the structure of the heuristic error bound remains the same. This suggests the importance of using $\uhigh$ in the norm computation, especially in practice, as it stabilizes the computation without affecting the asymptotic accuracy of the algorithm.

\subsection{Mixed Precision variant (mpWYQR)}\label{sec:mpwyqr}
Building on the reduced-precision algorithm introduced in the previous subsection, we now present the mixed-precision variant, which addresses the numerical instabilities observed when uniform low precision is used throughout the algorithm.

As discussed above, the two 2-norm evaluations in the Householder reflector construction are particularly sensitive in low precision. We therefore use $\uhigh$ for both norm evaluations. 
The remaining parts of the algorithm are carried out in $\ulow$, and the resulting LOO remains of order $\bigO{\ulow}$.

Numerical experiments show that the Householder reflector $I - \beta vv^T$ determines accuracy.
In particular, if any precision $u_p$ is used to compute the Householder reflector, the accuracy of the resulting mixed-precision Householder QR with WY representation will be limited by the order $\bigO{u_p}$. This motivates a careful precision assignment: the norm computation in the reflector construction uses a $\uhigh$ accumulator, the resulting scalar is converted to $\ulow$, and the remaining reflector quantities are formed and stored in $\ulow$. The more computationally intensive operations, such as the matrix--matrix multiplications in the WY representation update, are then carried out in $\ulow$.

The overhead of mpWYQR over a uniform low-precision WY Householder QR is limited to the two norm evaluations used to form each Householder reflector. For the $j$th reflector, with active vector length $\ell_j=m-j+1$, each $\ell_2$-norm requires $2\ell_j-1$ high-precision additions and multiplications and one square root, followed by one scalar conversion to $\ulow$. Since the conversions occur in registers, they introduce no additional memory traffic or memory. Summed over all $n$ reflectors, the high-precision work is $2\sum_{j=1}^n(2\ell_j-1)=4mn-2n^2$ flops plus $2n$ square roots and $2n$ scalar conversions. This is lower order than the leading factorization cost $F(m,n)=2mn^2-\frac{2}{3}n^3$. In the tall-skinny regime ($m\gg n$), the relative high-precision work remains $\mathcal{O}(1/n)$.

We now compare the computational and communication costs of mpBQR3 and mpWYQR. Let $b=n_b$ be the block size, assume for simplicity that $n=Nb$, and let $m_k=m-(k-1)b$ be the active panel height at block step $k$. A key structural difference is that mpWYQR uses a \texttt{geqrt}-type blocked panel kernel in $\ulow$, whereas mpBQR3 uses a \texttt{geqrf}/HQR-type panel factorization and forms the WY representation in $\uhigh$ before casting it to $\ulow$. Both algorithms apply the block reflectors through WY updates; the main cost difference is therefore the precision used to generate the reflectors and WY factors.

In terms of computational cost, the panel QR and WY construction in mpBQR3 require $\mathcal{O}(m_kb^2)$ high-precision flops at step $k$, for a total of $\mathcal{O}\bigl(b^2\sum_{k=1}^N m_k\bigr)=\mathcal{O}(mnb)$ 
high-precision flops overall. Its trailing updates contain $F(m,n)+\mathcal{O}(mnb)$ low-input block-FMA (bFMA) flops, whose accumulation is performed in $\uhigh$. In contrast, mpWYQR restricts $\uhigh$ to norm computations, while the remaining, more computationally dominant operations are carried out in $\ulow$. Furthermore, the block structure of mpWYQR enables greater parallelization via batch computation. As a result, both algorithms have the same leading total flop count, but mpWYQR reduces the high-precision work from $\mathcal{O}(mnb)$ to $\mathcal{O}(mn)$.

mpWYQR also improves precision-conversion communication. Here we count only data movement caused by changing precision, since the ordinary blocked-QR traffic is common to both algorithms. In mpBQR3, casting each active panel up and the panel and WY factors back down requires a constant number of passes over $m_kb$ entries. Since $b\sum_{k=1}^N m_k=mn-\frac{1}{2}n(n-b)$, the additional conversion traffic is $\mathcal{O}(mn)$ array entries; the constant depends on storage reuse. In mpWYQR, the high-precision norm accumulators and the two converted scalars per reflector remain in registers, so no high-precision panel is materialized and the additional cost is only $\mathcal{O}(n)$ scalar register conversions, with no additional memory traffic. Overall, mpWYQR is cheaper in both computational and communication costs than mpBQR3; see Table \ref{tab:comp}.

\begin{table}[h]
    \centering
    \footnotesize
    \begin{adjustbox}{max width=\textwidth}
    \begin{tabular}{@{}l c c c@{}}
      \toprule
      Algorithm
      & $\uhigh$ cost
      & $\ulow$/bFMA cost
      & Conversion traffic\\
      \midrule
      mpBQR3
      & $\mathcal{O}(mnb)$
      & $F(m,n)+\mathcal{O}(mnb)$ bFMA
      & $\mathcal{O}(mn)$ array entries\\
      mpWYQR
      & $4mn-2n^2$ 
      & $F(m,n)+\mathcal{O}(mnb)$
      & $0$ array; $\mathcal{O}(n)$ register\\
      \bottomrule
    \end{tabular}
    \end{adjustbox}
    \vspace{0.1cm}
    \caption{Cost comparison of mpBQR3 and mpWYQR, where $F(m,n)=2mn^2-\frac{2}{3}n^3$.}
    \label{tab:comp}
\end{table}

\section{Error Analysis}\label{sec:err}
Let $A \in \mathbb{R}^{m \times n}$ with $m \ge n$. The Householder QR factorization triangularizes $A$ by applying a sequence of Householder matrices.
Given a nonzero vector $v \in \mathbb{R}^m$, the Householder matrix (or Householder reflector) is of the form
\begin{equation}\label{eq:householder-mat-def}
    H = I - \beta vv^{T}, \qquad 
    \beta = 2/ v^{T}v,
\end{equation}
which zeros out the elements of $A$ by columns. The Householder matrices are never formed explicitly in practice, because storage and computations are feasible by solely using the Householder vector $v$.

The following lemma bounds the forward errors in the
Householder vector and scaling factor computed by
Algorithm~\ref{alg:householder-ref}. Here and henceforth, computed quantities are denoted with a hat notation.

\begin{lemma}
\label{lem:householder-scaler-vec}
Let $x \in \mathbb{R}^m$ be nonzero, with $\sign(0)=1$, and define
\[
\alpha=-\sign(x_1)\norm{x}_2,\qquad
z=x-\alpha e_1,\qquad
v=\frac{z}{\norm{z}_2},\qquad
\beta=\frac{2}{v^Tv}=2.
\]
In floating-point arithmetic with $\uhigh$ and $\ulow$, the computed Householder vector $\wh{v}$ and scaling factor $\wh{\beta}$ produced by Algorithm~\ref{alg:householder-ref} satisfy
\begin{equation}\label{eq:householder-scaler-vec}
    |\wh v-v|\le \eta_{2m+2,4}|v|,
    \qquad
    |\wh\beta-\beta| \le \eta_{2m+2,m+5}|\beta|.
\end{equation}
In particular,
\[
\norm{\wh v-v}_2\le\eta_{2m+2,4}\norm{v}_2.
\]
Consequently, when the corresponding $\mu_{p,q}$ are small, the first-order estimates are
\[
\norm{\wh v-v}_2
\le \bigl((2m+2)\uhigh+4\ulow
      +O(\mu_{2m+2,4}^2)\bigr)\norm{v}_2,
\]
\[
|\wh\beta-\beta|
\le \bigl((2m+2)\uhigh+(m+5)\ulow
      +O(\mu_{2m+2,m+5}^2)\bigr)|\beta|.
\]

\begin{proof}
For the input vector $x$, the standard inner-product error expansion~\cite[sect.~3.1, eq.~(3.2)]{Hig02} and the nonnegativity of the summands give
\begin{equation}\label{eq:householder-high-dot}
 \fl_h(x^Tx)=\xi_h\norm{x}_2^2,
 \qquad (1-\uhigh)^m\le\xi_h\le(1+\uhigh)^m\le(1-\uhigh)^{-m}.
\end{equation}
Let $\wh r(x)$ denote the norm of $x$ evaluated by a high-precision dot product and square root, followed by conversion to low precision.
Taking the square root in~\eqref{eq:householder-high-dot} and including its rounding and the conversion error gives
\begin{equation}\label{eq:householder-norm-bound}
 a\norm{x}_2\le\wh r(x)\le a^{-1}\norm{x}_2,
 \qquad a:=(1-\uhigh)^{m+1}(1-\ulow).
\end{equation}
Thus $\wh\alpha=-\sign(x_1)\wh r(x)$ satisfies $a\le\wh\alpha/\alpha\le a^{-1}$.
Since the sign choice gives $|z_1|=|x_1|+|\alpha|>0$ and prevents cancellation, we also have $a\le(x_1-\wh\alpha)/z_1\le a^{-1}$.
For the subsequent subtraction, we have
\[
 \wh z_1=(x_1-\wh\alpha)(1+\delta_z^\ell),
 \qquad |\delta_z^\ell|\le\ulow.
\]
It follows that
\begin{equation}\label{eq:householder-z-update}
 \wh z_1=gz_1,\qquad \wh z_i=z_i\quad(i>1), \qquad
 g=(1+\delta_z^\ell)\frac{x_1-\wh\alpha}{z_1}.
\end{equation}
Combining the preceding bounds yields
\begin{equation}\label{eq:householder-z-factor}
 b:=a(1-\ulow)\le g\le a^{-1}(1-\ulow)^{-1}=b^{-1},
 \qquad b\in(0,1].
\end{equation}
In particular, $\wh z\ne0$.

To analyze the normalization step, define $q=\widehat z /\norm{\widehat z}_2$, the exact normalization of the computed vector, and $t= \norm{\widehat z}_2/ \norm{z}_2$.
By~\eqref{eq:householder-z-update}, we have $t^2=1+(g^2-1)v_1^2$.
Since $\norm{v}_2=1$, we have $0\le v_1^2\le1$, and hence
\[
 \min\{1,g^2\}\le t^2\le\max\{1,g^2\}.
\]
Taking positive square roots shows that $t$ lies between $g$ and $1$.
Hence $g/t$ lies between $g$ and $1$, while $1/t$ lies between $g^{-1}$ and $1$.
For $v_i\ne0$, combining this with~\eqref{eq:householder-z-factor} gives
\begin{equation}\label{eq:householder-q-ratio}
 b\le\frac{q_i}{v_i} 
 =\frac{\widehat z_i/\|\widehat z\|_2}{z_i/\|z\|_2} 
 =\begin{cases}g/t,&i=1,\\1/t,&i>1,\end{cases}
 \le b^{-1}.
\end{equation}
Zero components of $v$ remain zero in $q$.
Repeating the norm analysis in~\eqref{eq:householder-norm-bound} for $\wh z\ne0$ gives
\begin{equation}\label{eq:householder-norm-factor}
 c:=\wh r(\wh z) / \norm{\wh z}_2, \qquad a\le c\le a^{-1}.
\end{equation}
Then the low-precision divisions satisfy
\begin{equation}\label{eq:householder-vector-division}
 \wh v_i=\frac{\wh z_i}{\wh r(\wh z)}(1+\delta_i^\ell)
 =c^{-1}q_i(1+\delta_i^\ell),
 \qquad |\delta_i^\ell|\le\ulow.
\end{equation}
Collecting these errors in $D=\diag(1+\delta_1^\ell,\ldots,1+\delta_m^\ell)$ yields
\[
 \wh v=c^{-1}Dq,\qquad |D-I|\le\ulow I.
\]
For $v_i\ne0$, combining~\eqref{eq:householder-q-ratio}--\eqref{eq:householder-vector-division} and then using~\eqref{eq:householder-z-factor} gives
\[
 b^2 = ab(1-\ulow)\le\frac{\wh v_i}{v_i}
 =\frac{1+\delta_i^\ell}{c}\frac{q_i}{v_i}
 \le\bigl[ab(1-\ulow)\bigr]^{-1} = b^{-2}.
\]
Since $1-b^2\le b^{-2}-1$ and zero components are preserved,
\[
 |\wh v-v|\le(b^{-2}-1)|v|=\eta_{2m+2,4}|v|,
\]
where $b^2=(1-\uhigh)^{2m+2}(1-\ulow)^4$.

For the final dot product, the low-precision analogue of~\eqref{eq:householder-high-dot} gives
\begin{equation}\label{eq:householder-beta-dot}
 \fl_\ell(\wh v^T\wh v)=\xi_\ell\norm{\wh v}_2^2,
 \qquad (1-\ulow)^m\le\xi_\ell\le(1+\ulow)^m\le(1-\ulow)^{-m}.
\end{equation}
Set $s=\norm{Dq}_2$.
Since $D$ is diagonal, $|D-I|\le\ulow I$ implies $\norm{D-I}_2\le\ulow$.
Together with $\norm{q}_2=1$, the reverse triangle inequality therefore gives
\[
 |s-1|\le\norm{(D-I)q}_2\le\ulow.
\]
Thus, using $\wh v=c^{-1}Dq$, we have
\begin{equation}\label{eq:householder-beta-norm}
 1-\ulow\le s\le1+\ulow\le(1-\ulow)^{-1},
 \qquad \norm{\wh v}_2^2=c^{-2}s^2.
\end{equation}
The final low-precision division and $\beta=2$ then give
\begin{equation}\label{eq:householder-beta-division}
 \wh\beta
 =\frac{2(1+\delta_\beta^\ell)}{\fl_\ell(\wh v^T\wh v)}\\
 =\beta\frac{(1+\delta_\beta^\ell)c^2}{\xi_\ell s^2},
 \qquad |\delta_\beta^\ell|\le\ulow.
\end{equation}
Combining~\eqref{eq:householder-norm-factor} and~\eqref{eq:householder-beta-dot}--\eqref{eq:householder-beta-division} gives
\[
 L_\beta\le\frac{\wh\beta}{\beta}\le L_\beta^{-1},
 \qquad L_\beta:=a^2(1-\ulow)^{m+3}
 =(1-\uhigh)^{2m+2}(1-\ulow)^{m+5}.
\]
Since $0<L_\beta\le1$ and $1-L_\beta\le L_\beta^{-1}-1$, it follows that
\[
 |\wh\beta-\beta|\le(L_\beta^{-1}-1)|\beta|
 =\eta_{2m+2,m+5}|\beta|.
\]
Finally, taking Euclidean norms in the componentwise bound for $\wh v$ and expanding $\eta_{p,q}$ as in Section~\ref{sec:fp-error-analysis} gives the remaining statements.
\end{proof}
\end{lemma}

Lemma~\ref{lem:householder-scaler-vec} moves the dimension-dependent errors in the two norm evaluations to $\uhigh$, while the low-precision term proportional to $m\ulow$ remains in the bound for $\wh\beta$, since the inner product $\wh v^T\wh v$ is still evaluated in $\ulow$.
Next, we provide rounding error bounds for the application of a Householder matrix to a vector.

\begin{lemma}
\label{thm:householder-application-mp}
Let $v,\beta$ define the exact Householder matrix $P = I - \beta v v^T \in \mathbb{R}^{m\times m}$ associated with a nonzero $x$ as in Lemma~\ref{lem:householder-scaler-vec}, and let $\wh v,\wh\beta$ be the computed quantities produced by Algorithm~\ref{alg:householder-ref}.
Let $b\in\mathbb{R}^m$, and consider the computed application
\[
\wh y
= \fl_\ell \bigl(b - \wh{\beta}\,\wh{v}(\wh{v}^Tb)\bigr),
\]
where all arithmetic in the application is performed in $\ulow$. Then
\begin{equation}\label{eq:householder-application-mp}
\wh y=(P+\Delta P)b,
\qquad
\norm{\Delta P}_F\le\varepsilon_m,
\end{equation}
where the all-orders bound is
\begin{align}
\varepsilon_m := {}&
2\eta_{0,m+1}+4\eta_{m+1,4}+\eta_{0,1}
+2(1+\eta_{0,m+1})\eta_{0,m+3}.
\label{eq:epsilon-m}
\end{align}
When $\mu_{4m+4,4m+25}$ is small, this gives the first-order estimate
\begin{equation}\label{eq:epsilon-m-first-order}
\varepsilon_m
=(4m+4)\uhigh+(4m+25)\ulow
+O(\mu_{4m+4,4m+25}^2).
\end{equation}
\end{lemma}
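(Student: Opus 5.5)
The plan is to follow the structure of Higham's analysis of a single Householder application~\cite[Lem.~19.2]{Hig02}. The idea is to split $\wh y-Pb$ into three contributions: the perturbation of the reflector itself, the low-precision inner product, and the final scaled axpy. Each contribution is then written as a rank-one (or diagonal) matrix acting on $b$, and its Frobenius norm is bounded using $\norm{xy^T}_F=\norm{x}_2\norm{y}_2$ together with $\norm{v}_2=1$.

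\textbf{Step 1: the reflector perturbation.} Compare the computed reflector $\wh P:=I-\wh\beta\wh v\wh v^T$ with $P$. A naive bound on $\wh\beta$ alone would give $\eta_{2m+2,m+5}$, which carries an unwanted $m\ulow$ term. To avoid it, I reuse the representation from the proof of Lemma~\ref{lem:householder-scaler-vec}:
\[
\wh v=c^{-1}Dq,\qquad
\wh\beta=\beta\,\frac{(1+\delta_\beta^\ell)\,c^2}{\xi_\ell\, s^2}.
\]
In the product the factor $c$ cancels, leaving
\[
\wh\beta\,\wh v\wh v^T=\frac{2(1+\delta_\beta^\ell)}{\xi_\ell s^2}\,Dqq^TD.
\]
I then compare $Dq$ with $v$. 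The ratios $q_i/v_i$ lie in $[b,b^{-1}]$, where $b=(1-\uhigh)^{m+1}(1-\ulow)^2$, and $D$ contributes one further factor $(1-\ulow)^{\pm1}$. Writing $Dq=v+e$, this gives $|e|\le\eta_{m+1,\cdot}|v|$. Expanding $(v+e)(v+e)^T-vv^T$ with a coefficient $2$ should produce the term $4\eta_{m+1,4}$. The remaining scalar factor $(1+\delta_\beta^\ell)/(\xi_\ell s^2)$ lies within $(1-\ulow)^{\mp(m+3)}$, which should give the term $2\eta_{0,m+3}$ multiplying $\norm{vv^T}_F=1$.

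\textbf{Step 2: the inner product and the final update.} The low-precision inner product is modelled as $\fl_\ell(\wh v^Tb)=(\wh v+\Delta\wh v)^Tb$ with $|\Delta\wh v|\le\eta_{0,m}|\wh v|$. The multiplication by $\wh\beta$ and by $\wh v_i$ adds a constant number of further $\ulow$ factors; collecting these gives a relative factor $\eta_{0,m+1}$. Through the coefficient $2$ this yields the terms $2\eta_{0,m+1}$, and through its product with the $\beta$-scaling error it yields the cross term $2\eta_{0,m+1}\eta_{0,m+3}$. The final subtraction $b-w$ is handled by writing $\fl_\ell(b_i-w_i)=(b_i-w_i)(1+\delta_i)$. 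This becomes the diagonal perturbation $\diag(\delta_i)$ applied to $(P+\cdots)b$, and it accounts for $\eta_{0,1}$. All of these errors are then combined into a single $\Delta P$ with
\[
\wh y=(P+\Delta P)b .
\]

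\textbf{Step 3: summing and the first-order estimate.} I sum the Frobenius norms of the pieces to obtain $\varepsilon_m$, and then read off \eqref{eq:epsilon-m-first-order} from $\eta_{p,q}=\mu_{p,q}+O(\mu_{p,q}^2)$.

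\textbf{Main obstacle.} I expect the delicate part to be the bookkeeping in Step 1 and in the cross terms. The constants $4$ and $m+3$ must come out exactly as stated, so I will need to choose precisely which $\ulow$ factors (those from $D$, from $s$, from $\xi_\ell$ and from $\delta_\beta^\ell$) are attributed to $e$ and which to the scalar factor. I will also need to check that second-order pieces such as $\norm{e}_2^2$ are absorbed into the $\eta$ bounds rather than left over. If the exact constants resist this assignment, I would first try bounding $\norm{(v+e)(v+e)^T-vv^T}_F\le 2\norm{e}_2+\norm{e}_2^2$, and then absorb $\norm{e}_2^2$ using $\eta_{p,q}$ monotonicity, e.g.\ $(1+\eta_{p,q})^2-1\le\eta_{2p,2q}$.
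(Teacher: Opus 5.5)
There is a genuine gap. Your bookkeeping does not produce $\varepsilon_m$, because it swaps the roles of $\eta_{0,m+1}$ and $\eta_{0,m+3}$. Your total matches only because $2\eta_{0,m+1}+2(1+\eta_{0,m+1})\eta_{0,m+3}$ is symmetric under $m+1\leftrightarrow m+3$.

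\textbf{Step~2 count.} A length-$m$ low-precision dot product ($\eta_{0,m}$) followed by the multiplications by $\wh\beta$ and by $\wh v_i$ cannot give a factor $\eta_{0,m+1}$. Once the rounding of the final subtraction is also charged to the product term, the application carries $\eta_{0,m+3}$. The construction error must therefore be at most $2\eta_{0,m+1}+4\eta_{m+1,4}$.

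\textbf{Step~1 cannot reach that.} Keeping $s^{-2}$ in the scalar $\kappa=(1+\delta_\beta^\ell)/(\xi_\ell s^2)$ costs $\eta_{0,m+3}$, and the vector $Dq$ has norm $s\neq1$. Even done carefully, your split gives at best $2\eta_{0,m+3}+4\eta_{m+1,4}$, which overshoots $\varepsilon_m$ by $2(\eta_{0,m+3}-\eta_{0,m+1})$. Your fallback via $(1+\eta_{m+1,3})^2-1=\eta_{2m+2,6}$ is worse still. It leaves an excess $2\eta_{m+1,3}^2$ that the slack between $4\eta_{m+1,3}$ and $4\eta_{m+1,4}$ does not cover in general. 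For example, it fails when $\uhigh=\ulow=u$ and $m\gtrsim\sqrt{2/u}$.

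\textbf{The missing idea is to normalize.} Set $p=Dq/s$ and $\lambda=(1+\delta_\beta^\ell)/\xi_\ell$.
\begin{itemize}
\item Both $c$ and $s$ cancel, so $\wh\beta\wh v\wh v^T=2\lambda pp^T$ exactly.
\item $\norm{p}_2=1$ and $(1-\ulow)^{m+1}\le\lambda\le(1-\ulow)^{-(m+1)}$.
\item $|p-v|\le\eta_{m+1,4}|v|$, because $(1-\ulow)^2\le d_{ii}/s\le(1-\ulow)^{-2}$ supplies the third and fourth low-precision factors.
\end{itemize}
Writing $pp^T-vv^T=p(p-v)^T+(p-v)v^T$ gives $\norm{\widetilde P-P}_F\le2\eta_{0,m+1}+4\eta_{m+1,4}$, with no quadratic remainder. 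Here $\widetilde P=I-\wh\beta\wh v\wh v^T$ is your $\wh P$.

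\textbf{The final assembly also fails as described.} The diagonal piece $\diag(\delta_i)$ acting on $(P+\cdots)b$ can have Frobenius norm as large as $\ulow\norm{P}_F=\sqrt m\,\ulow$. Summing Frobenius norms of matrix pieces therefore cannot produce the term $\eta_{0,1}$. In addition, multiplying the whole computed vector $b-\wh w$ by $1+\delta_i$ creates further cross terms.

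Work with vectors instead:
\begin{itemize}
\item Split the subtraction error componentwise as $|\delta_i(b_i-\wh w_i)|\le\ulow|b_i|+\ulow|\wh w_i|$, and absorb the second part into the product term. This gives $|\wh y-\widetilde Pb|\le\eta_{0,1}|b|+\eta_{0,m+3}|\wh\beta|\,|\wh v|\,(|\wh v|^T|b|)$.
\item Apply Cauchy--Schwarz together with $|\wh\beta|\norm{\wh v}_2^2=2|\lambda|\le2(1+\eta_{0,m+1})$. This yields $\norm{\wh y-\widetilde Pb}_2\le\bigl(\eta_{0,1}+2(1+\eta_{0,m+1})\eta_{0,m+3}\bigr)\norm{b}_2$, and this is where the cross term actually comes from.
\item Only at the end form the single rank-one matrix $\Delta P=(\wh y-Pb)b^T/\norm{b}_2^2$, with $\Delta P=0$ if $b=0$. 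Its Frobenius norm equals $\norm{\wh y-Pb}_2/\norm{b}_2\le\varepsilon_m$.
\end{itemize}
This is also how Higham's Lemma~19.2, which you cite as your model, concludes.
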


\begin{proof}
Set $\widetilde P=I-\wh\beta\wh v\wh v^T$ and separate the error in the stored reflector from the rounding error in its application as
\begin{equation}\label{eq:householder-application-split}
 \wh y-Pb=(\widetilde P-P)b+(\wh y-\widetilde Pb).
\end{equation}
We first bound $\widetilde P-P$ using the quantities $q,c,D,s$ from the proof of Lemma~\ref{lem:householder-scaler-vec}.
Define
\[
 p=\frac{Dq}{s},\qquad
 \lambda=\frac{1+\delta_\beta^\ell}{\xi_\ell}.
\]
Since $s=\norm{Dq}_2>0$, we have $\norm{p}_2=1$.
By~\eqref{eq:householder-beta-dot} and~\eqref{eq:householder-beta-division},
\[
 (1-\ulow)^{m+1}\le\lambda\le(1-\ulow)^{-m-1},
 \qquad |\lambda-1|\le\eta_{0,m+1}.
\]
Moreover, $\wh v=(s/c)p$ and $\wh\beta=2\lambda c^2/s^2$, so
\begin{equation}\label{eq:householder-stored-reflector}
 \wh\beta\wh v\wh v^T=2\lambda pp^T,
 \qquad |\wh\beta|\norm{\wh v}_2^2=2|\lambda|.
\end{equation}

To compare $p$ with $v$, first note the division-error bounds in~\eqref{eq:householder-vector-division} and the bounds on $s$ in~\eqref{eq:householder-beta-norm} give
\[
 (1-\ulow)^2\le\frac{d_{ii}}{s}\le(1-\ulow)^{-2},
\]
where $d_{ii}$ denote the diagonal entries of $D$.
For $v_i\ne0$, we have $p_i/v_i=(d_{ii}/s)(q_i/v_i)$.
Combining this with~\eqref{eq:householder-z-factor} and~\eqref{eq:householder-q-ratio} yields
\[
 (1-\uhigh)^{m+1}(1-\ulow)^4
 \le\frac{p_i}{v_i}
 \le(1-\uhigh)^{-m-1}(1-\ulow)^{-4}.
\]
Zero components of $v$ remain zero in $p$, and hence
\[
 |p-v|\le\eta_{m+1,4}|v|,
 \qquad \norm{p-v}_2\le\eta_{m+1,4}.
\]
Using $\norm{p}_2=\norm{v}_2=1$ and $pp^T-vv^T=p(p-v)^T+(p-v)v^T$,
we obtain from~\eqref{eq:householder-stored-reflector}
\begin{align}
 \norm{\widetilde P-P}_F
 &=2\norm{\lambda pp^T-vv^T}_F
 \le2|\lambda-1|+4\norm{p-v}_2\notag\\
 &\le2\eta_{0,m+1}+4\eta_{m+1,4}.
 \label{eq:householder-construction-error}
\end{align}

We now hold $\wh v$ and $\wh\beta$ fixed and analyze their low-precision application to $b$.
For the dot product, we have
\begin{equation}\label{eq:householder-application-dot}
 d=\wh v^Tb,\qquad \wh d=\fl_\ell(\wh v^Tb),\qquad
 |\wh d-d|\le\eta_{0,m}|\wh v|^T|b|.
\end{equation}
In particular, $|\wh d|\le(1+\eta_{0,m})|\wh v|^T|b|$.
The standard componentwise error bounds for the two multiplications and the final subtraction~\cite[sect.~2.2]{Hig02} give
\begin{equation}\label{eq:householder-application-update}
 \bigl|\wh y-(b-\wh\beta\wh v\wh d)\bigr|
 \le\ulow|b|+|\wh\beta|\eta_{0,3}|\wh v|\,|\wh d|.
\end{equation}
Combining~\eqref{eq:householder-application-dot} and~\eqref{eq:householder-application-update}, and using
\[
 \eta_{0,m}+(1+\eta_{0,m})\eta_{0,3}=\eta_{0,m+3},
 \qquad \ulow\le\eta_{0,1},
\]
yields
\[
 |\wh y-\widetilde Pb|
 \le\eta_{0,1}|b|
 +|\wh\beta|\eta_{0,m+3}|\wh v|\bigl(|\wh v|^T|b|\bigr).
\]
Taking Euclidean norms and applying the Cauchy--Schwarz inequality gives
\[
 \norm{\wh y-\widetilde Pb}_2
 \le\left(\eta_{0,1}
 +|\wh\beta|\norm{\wh v}_2^2\eta_{0,m+3}\right)\norm{b}_2.
\]
By~\eqref{eq:householder-stored-reflector},
$|\wh\beta|\norm{\wh v}_2^2=2|\lambda|\le2(1+\eta_{0,m+1})$, and therefore
\begin{equation}\label{eq:householder-application-rounding}
 \norm{\wh y-\widetilde Pb}_2
 \le\left(\eta_{0,1}
 +2(1+\eta_{0,m+1})\eta_{0,m+3}\right)\norm{b}_2.
\end{equation}
Combining~\eqref{eq:householder-application-split}, \eqref{eq:householder-construction-error}, and~\eqref{eq:householder-application-rounding} gives $\norm{\wh y-Pb}_2\le\varepsilon_m\norm{b}_2$, with $\varepsilon_m$ defined in~\eqref{eq:epsilon-m}. For $b\ne0$, set
\[
 \Delta P=\frac{(\wh y-Pb)b^T}{\norm{b}_2^2}.
\]
Then $\Delta Pb=\wh y-Pb$. The identity $\norm{uv^T}_F=\norm{u}_2\norm{v}_2$, together with the preceding estimate, gives the required Frobenius-norm bound and proves~\eqref{eq:householder-application-mp}.
For $b=0$, the computed result is zero and we take $\Delta P=0$.
Finally, when $\mu_{4m+4,4m+25}\ll1$, expanding the $\eta_{p,q}$ terms in~\eqref{eq:epsilon-m}, as in Section~\ref{sec:fp-error-analysis}, gives~\eqref{eq:epsilon-m-first-order}.
\end{proof}

Next we derive columnwise error bounds for applying a sequence of Householder transformations to a matrix. These bounds describe the reflector-level sequential application; we discuss additional errors from forming and applying a WY representation below.

\begin{lemma}
\label{thm:mp-householder-columnwise}
Let $A\in\mathbb{R}^{m\times n}$ and consider $r$ Householder transformations
\[
P_k = I - \beta_k v_k v_k^T, \qquad k=1,\dots,r,
\]
whose computed reflector quantities are produced by Algorithm~\ref{alg:householder-ref}. Let $Q^T=P_rP_{r-1}\cdots P_1$. Then the computed result satisfies
\[
\wh A_{r+1}=Q^T(A+\Delta A),
\]
with the all-orders columnwise and normwise bounds
\begin{equation}\label{eq:columnwise-all-orders}
\norm{\Delta a_j}_2
\le\bigl((1+\varepsilon_m)^r-1\bigr)\norm{a_j}_2,
\qquad
\norm{\Delta A}_F
\le\bigl((1+\varepsilon_m)^r-1\bigr)\norm{A}_F.
\end{equation}
If $r\mu_{4m+4,4m+25}$ is small, the corresponding first-order estimate is
\[
\norm{\Delta a_j}_2
\le
\left(r\bigl((4m+4)\uhigh+(4m+25)\ulow\bigr)
+O(r^2\mu_{4m+4,4m+25}^2)\right)\norm{a_j}_2,
\]
and the same factor bounds $\norm{\Delta A}_F/\norm{A}_F$.
\end{lemma}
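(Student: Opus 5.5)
We argue by induction on the number of applied reflectors, column by column, following the structure of Higham's uniform-precision argument~\cite[Lem.~19.3]{Hig02}. The only inputs are Lemma~\ref{thm:householder-application-mp} and the orthogonality of the exact reflectors. Fix a column index $j$. Let $a_j^{(k)}$ be the computed $j$th column after $k-1$ reflectors have been applied, so that $a_j^{(1)}=a_j$.

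Step $k$ computes $a_j^{(k+1)}=\fl_\ell\bigl(a_j^{(k)}-\wh\beta_k\wh v_k(\wh v_k^Ta_j^{(k)})\bigr)$. Here $\wh v_k,\wh\beta_k$ come from Algorithm~\ref{alg:householder-ref} applied to the relevant (computed) column. Lemma~\ref{thm:householder-application-mp} then gives $a_j^{(k+1)}=(P_k+\Delta P_k)a_j^{(k)}$ with $\norm{\Delta P_k}_F\le\varepsilon_m$. Here $P_k$ is the exact reflector associated with the vector from which $\wh v_k$ was formed, and it is this $P_k$ that enters $Q^T$. For reflectors acting only on a trailing subvector of length $m-k+1$, we embed them as $\diag(I,P_k)$. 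The resulting $\varepsilon_{m-k+1}$ is bounded by $\varepsilon_m$, since each $\eta_{p,q}$ is increasing in $p$ and $q$. Unrolling the recursion gives
\[
a_j^{(r+1)}=(P_r+\Delta P_r)\cdots(P_1+\Delta P_1)a_j .
\]

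The key step is to write this product as $Q^T(I+E_j)$ with a bound on $\norm{E_j}_2$. Define
\[
\Delta a_j := Q\,a_j^{(r+1)}-a_j=\bigl(Q\textstyle\prod_{k}(P_k+\Delta P_k)-I\bigr)a_j .
\]
To bound it, expand the product and use that each $P_k$ is orthogonal. Every term containing a nonempty subset $S$ of the perturbations, after multiplication by $Q$, has spectral norm at most $\prod_{k\in S}\norm{\Delta P_k}_2\le\varepsilon_m^{|S|}$. Summing over all nonempty subsets gives
\[
\norm{Q\textstyle\prod_k(P_k+\Delta P_k)-I}_2\le\sum_{s=1}^{r}\binom{r}{s}\varepsilon_m^s=(1+\varepsilon_m)^r-1 .
\]
Equivalently, one can induct on $k$ using $\norm{(P_k+\Delta P_k)M}_2\le(1+\varepsilon_m)\norm{M}_2$, which avoids the combinatorics. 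Either way, this yields the columnwise bound $\norm{\Delta a_j}_2\le((1+\varepsilon_m)^r-1)\norm{a_j}_2$.

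The normwise bound follows by squaring, summing over $j$, and taking square roots, since $\norm{\Delta A}_F^2=\sum_j\norm{\Delta a_j}_2^2$. The first-order estimate comes from the binomial expansion $(1+\varepsilon_m)^r-1=r\varepsilon_m+O(r^2\varepsilon_m^2)$, combined with~\eqref{eq:epsilon-m-first-order}. Because $\varepsilon_m=O(\mu_{4m+4,4m+25})$, the remainder is $O(r^2\mu^2_{4m+4,4m+25})$ under the stated smallness assumption.

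The main obstacle is a subtlety rather than a computation. The reflectors $P_k$ are determined by computed intermediate columns, not by the columns of $A$. The induction must therefore treat each $P_k$ as the exact reflector of the actually computed data, which Lemma~\ref{thm:householder-application-mp} permits because it holds for arbitrary nonzero $x$ and arbitrary $b$. The reflector is also applied to its own generating column. Lemma~\ref{thm:householder-application-mp} still covers that column since $b$ is arbitrary, so no special case is needed. This is harmless, but it should be noted explicitly when the result is used with WY blocking.
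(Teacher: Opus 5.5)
Your proposal is correct and follows the paper's proof essentially step for step. Like the paper, you apply Lemma~\ref{thm:householder-application-mp} column by column to get $\wh a_j^{(r+1)}=(P_r+\Delta P_{r,j})\cdots(P_1+\Delta P_{1,j})a_j$, use the orthogonality of the $P_k$ to bound the deviation from $P_r\cdots P_1$ by $(1+\varepsilon_m)^r-1$, set $\Delta a_j=Q\wh a_j^{(r+1)}-a_j$, sum the squared columnwise bounds for the Frobenius bound, and expand for the first-order estimate. Your extra details make explicit points the paper leaves implicit: the $\diag(I,P_k)$ embedding with $\varepsilon_{m-k+1}\le\varepsilon_m$ by monotonicity of $\eta_{p,q}$, and taking $P_k$ as the exact reflector of the computed column.
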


\begin{proof}
For each column $a_j$, Lemma~\ref{thm:householder-application-mp} gives
\[
\wh a_j^{(r+1)}
=(P_r+\Delta P_{r,j})\cdots(P_1+\Delta P_{1,j})a_j,
\qquad
\norm{\Delta P_{k,j}}_2\le\norm{\Delta P_{k,j}}_F\le\varepsilon_m.
\]
Since every $P_k$ is orthogonal,
\[
\norm{
(P_r+\Delta P_{r,j})\cdots(P_1+\Delta P_{1,j})
-P_r\cdots P_1
}_2
\le (1+\varepsilon_m)^r-1.
\]
Setting $\Delta a_j=Q\wh a_j^{(r+1)}-a_j$ proves the columnwise bound. Summing the squared columnwise bounds gives the Frobenius-norm bound. The first-order estimate follows from~\eqref{eq:epsilon-m-first-order} and the expansion of $(1+\varepsilon_m)^r$.
\end{proof}

For the active column $x$ used to construct a reflector, we have $Px=\alpha e_1$, and the stored diagonal entry $\wh\alpha$ satisfies $|\wh\alpha-\alpha|\le\eta_{m+1,1}\norm{x}_2\le\varepsilon_m\norm{x}_2$. Thus Lemma~\ref{thm:mp-householder-columnwise} yields the following QR backward-error result by the standard argument~\cite[Thm.~19.4]{Hig02}.

\begin{theorem}
\label{thm:mp-householder-stad}
Let $\wh R\in\R^{m\times n}$ be the computed upper trapezoidal QR factor of $A\in\R^{m\times n}$ obtained from the Householder reflector constructions and sequential applications analyzed above. Then there exists an orthogonal $Q\in\R^{m\times m}$ such that
\[
    A + \Delta A = Q \wh R,
\]
where
\begin{equation}\label{eq:qr-backward-all-orders}
\norm{\Delta a_j}_2
\le \bigl((1+\varepsilon_m)^n-1\bigr)\norm{a_j}_2,
\qquad j=1,\ldots,n.
\end{equation}
If $n\mu_{4m+4,4m+25}$ is small, the first-order estimate is
\[
\norm{\Delta a_j}_2
\le
\left(n\bigl((4m+4)\uhigh+(4m+25)\ulow\bigr)
+O(n^2\mu_{4m+4,4m+25}^2)\right)\norm{a_j}_2.
\]
The matrix $Q$ is given explicitly by $Q=(P_nP_{n-1}\cdots P_1)^T$, where $P_k$ is the exact Householder matrix associated with the $k$th active column.
\end{theorem}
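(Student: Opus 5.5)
The plan is to follow the standard argument of \cite[Thm.~19.4]{Hig02}, using Lemma~\ref{thm:mp-householder-columnwise} as the main ingredient. The key observation is that column $j$ of $\wh R$ is produced by a sequence of at most $n$ reflector applications. First it undergoes the transformations $P_1,\dots,P_{j-1}$ (and $P_j$ when $j\le n$), which are applied in the usual way. Then, at the step where it becomes the active column, its subdiagonal part is \emph{not} computed from the update formula: it is set to zero, and the diagonal entry is set to the stored value $\wh\alpha_j$. Later reflectors $P_{j+1},\dots,P_n$ act as the identity on the first $j$ rows, so they leave the completed column untouched. Each reflector is embedded as $\mathrm{diag}(I_{k-1},\widetilde P_k)$, where $\widetilde P_k$ has dimension $m-k+1\le m$. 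Since $\varepsilon_{m'}$ is increasing in $m'$, every step is bounded by $\varepsilon_m$.

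\textbf{Step 1: the non-active steps.} For fixed $j$, apply Lemma~\ref{thm:householder-application-mp} at each step $k<j$ to the current column. This gives $\wh a_j^{(k+1)}=(P_k+\Delta P_{k,j})\wh a_j^{(k)}$ with $\norm{\Delta P_{k,j}}_F\le\varepsilon_m$. The embedding with the identity block does not change the Frobenius norm of the perturbation.

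\textbf{Step 2: the active step $k=j$.} This step needs separate handling and is the main obstacle. The computed column is not $\fl_\ell(x-\wh\beta\wh v(\wh v^Tx))$ but $\wh\alpha e_1$, where $x$ is the current active subcolumn. Since $Px=\alpha e_1$ exactly, we have
\[
\wh\alpha e_1-Px=(\wh\alpha-\alpha)e_1,
\qquad
|\wh\alpha-\alpha|\le\eta_{m+1,1}\norm{x}_2\le\varepsilon_m\norm{x}_2 .
\]
The first bound is the stated consequence of~\eqref{eq:householder-norm-bound}. The comparison $\eta_{m+1,1}\le\varepsilon_m$ holds because $\varepsilon_m$ contains the term $4\eta_{m+1,4}$, which already dominates $\eta_{m+1,1}$. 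Setting
\[
\Delta P_{j,j}=\frac{(\wh\alpha-\alpha)e_1x^T}{\norm{x}_2^2}
\]
gives a perturbation with $\norm{\Delta P_{j,j}}_F\le\varepsilon_m$. It has exactly the same form as those produced by Lemma~\ref{thm:householder-application-mp}, so it fits the same framework.

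\textbf{Step 3: composing the steps.} Write $\wh r_j=(P_j+\Delta P_{j,j})\cdots(P_1+\Delta P_{1,j})a_j$, padded with identity factors up to $n$ factors. Orthogonality of the $P_k$ and the telescoping estimate from the proof of Lemma~\ref{thm:mp-householder-columnwise} then give
\[
\norm{\wh r_j-P_n\cdots P_1a_j}_2\le\bigl((1+\varepsilon_m)^n-1\bigr)\norm{a_j}_2 .
\]
Here the later reflectors $P_{j+1},\dots,P_n$ are inserted as exact factors; this is valid because they fix $\wh r_j$ exactly and fix $P_j\cdots P_1 a_j$ in its nonzero part. Define $Q=(P_n\cdots P_1)^T$ and $\Delta a_j=Q\wh r_j-a_j$. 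Orthogonality of $Q$ then gives $\norm{\Delta a_j}_2=\norm{\wh r_j-Q^Ta_j}_2$, which yields~\eqref{eq:qr-backward-all-orders}. The factor is the same $Q$ for every column because the $P_k$ are the exact reflectors of the computed active columns.

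\textbf{Step 4: first-order estimate.} Expand $(1+\varepsilon_m)^n-1=n\varepsilon_m+O(n^2\varepsilon_m^2)$ and substitute~\eqref{eq:epsilon-m-first-order}.

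The main care point is the bookkeeping in Steps 2 and 3. One must check that the case $j>n$ of trailing columns does not arise, since $\wh R$ has only $n$ columns. One must also check that the exact $P_k$ are defined from the computed intermediate columns and not from the exact ones, which is what makes a single orthogonal $Q$ work for all columns simultaneously.
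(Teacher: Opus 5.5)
Your proposal is correct and takes the same route as the paper. The paper handles the non-active steps with Lemma~\ref{thm:mp-householder-columnwise}, absorbs the active step through $|\wh\alpha-\alpha|\le\eta_{m+1,1}\norm{x}_2\le\varepsilon_m\norm{x}_2$, and defers the rest to the standard argument of \cite[Thm.~19.4]{Hig02}, which you have written out in detail. One small point in Step~3: you only need $P_k\wh r_j=\wh r_j$ for $k>j$, so that those factors enter the telescoping product with zero perturbation; your remark about $P_j\cdots P_1a_j$ is unnecessary, and that vector is in general not supported in the first $j$ rows, because the exact $P_k$ are built from the computed columns.
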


Finally, we present results concerning the error in the computed $Q$, the factorization residual, and the loss of orthogonality.

\begin{theorem}
\label{thm:mp-householder-loo}
Under the assumptions of Theorem~\ref{thm:mp-householder-stad}, let $\wh Q^T$ be the result of applying the same computed reflector quantities to $I_m$ in the order used to form $Q^T$, and set $\wh Q=(\wh Q^T)^T$. Then
\begin{equation}\label{eq:q-error-all-orders}
\norm{\wh Q-Q}_F
\le \sqrt{m}\bigl((1+\varepsilon_m)^n-1\bigr).
\end{equation}
Moreover,
\begin{equation}\label{eq:residual-all-orders}
\norm{A-\wh Q\wh R}_F
\le
\bigl((1+\varepsilon_m)^n-1\bigr)
\bigl(1+\sqrt{m}(1+\varepsilon_m)^n\bigr)\norm{A}_F,
\end{equation}
and
\begin{equation}\label{eq:loo-all-orders}
\norm{\wh Q^T\wh Q-I}_F
\le
2\sqrt{m}\bigl((1+\varepsilon_m)^n-1\bigr)
+m\bigl((1+\varepsilon_m)^n-1\bigr)^2.
\end{equation}
If $n\mu_{4m+4,4m+25}$ is small, the respective first-order estimates are
\[
\norm{\wh Q-Q}_F
\le
n\sqrt{m}\bigl((4m+4)\uhigh+(4m+25)\ulow\bigr)
+O(n^2\sqrt{m}\,\mu_{4m+4,4m+25}^2),
\]
\[
\norm{A-\wh Q\wh R}_F
\le
n(1+\sqrt{m})\bigl((4m+4)\uhigh+(4m+25)\ulow\bigr)\norm{A}_F
+O\bigl(n^2(1+\sqrt{m})\mu_{4m+4,4m+25}^2\bigr)\norm{A}_F,
\]
and
\[
\norm{\wh Q^T\wh Q-I}_F
\le
2n\sqrt{m}\bigl((4m+4)\uhigh+(4m+25)\ulow\bigr)
+O(mn^2\mu_{4m+4,4m+25}^2).
\]
The orthogonal matrix $Q$ is the one in Theorem~\ref{thm:mp-householder-stad}.
\begin{proof}
Applying Lemma~\ref{thm:mp-householder-columnwise} to $I_m$ and transposing the result gives~\eqref{eq:q-error-all-orders}. Let
\[
\Delta A=Q\wh R-A,
\qquad
E=\wh Q-Q.
\]
By Theorem~\ref{thm:mp-householder-stad} and~\eqref{eq:q-error-all-orders},
\[
\norm{\Delta A}_F
\le\bigl((1+\varepsilon_m)^n-1\bigr)\norm{A}_F,
\]
\[
\norm{E}_F
\le\sqrt{m}\bigl((1+\varepsilon_m)^n-1\bigr),
\qquad
\norm{\wh R}_F
\le(1+\varepsilon_m)^n\norm{A}_F.
\]
Therefore,
\[
\norm{A-\wh Q\wh R}_F
\le\norm{\Delta A}_F+\norm{E}_F\norm{\wh R}_F,
\]
which gives~\eqref{eq:residual-all-orders}. Finally,
\[
\wh Q^T\wh Q-I=Q^TE+E^TQ+E^TE,
\]
so
\[
\norm{\wh Q^T\wh Q-I}_F
\le2\norm{E}_F+\norm{E}_F^2,
\]
which proves~\eqref{eq:loo-all-orders}. Expanding the all-orders bounds gives the stated first-order estimates.
\end{proof}
\end{theorem}


Note that the mixed-precision bounds in Theorems~\ref{thm:mp-householder-stad} and~\ref{thm:mp-householder-loo} have the same qualitative form as the standard uniform-precision error bounds~\cite[sect.~19.3]{Hig02}.
Our results essentially reduce these bounds by a constant factor via performing the norm evaluation in the Householder matrix construction at $\uhigh$.
Since our approach differs from the standard construction of the Householder vector and scaling factor only in the norm evaluation, the above conclusion also applies when the WY representation~\cite{BisV87}, or its more storage-efficient compact WY representation~\cite{SchV89}, is used in the assembly and application of the Householder matrices~\cite[sect.~19.5]{Hig02}.
Additional rounding errors incurred in forming and applying the WY representation must be accounted for by the corresponding matrix--matrix error analysis.

Connolly and Higham~\cite[Thm.~4.4 and eq.~(4.20)]{ConH23} show that the dimension dependence in the standard deterministic Householder QR bounds can be substantially reduced under a probabilistic model in which rounding errors are mean independent and have mean zero, together with suitable probabilistic bounds for applying a sequence of Householder matrices to a vector.
In uniform precision, the worst-case $mnu$ dependence is replaced by $\sqrt{mn}u$, up to constants and powers of the probability parameter $\lambda$~\cite[eq.~(4.17)]{ConH23}.
An analogous probabilistic loss-of-orthogonality result also holds~\cite[Thm.~4.5]{ConH23}.
Applying the same reasoning to the present mixed-precision analysis suggests that the deterministic first-order term
\[
n\bigl((4m+4)\uhigh+(4m+25)\ulow\bigr)
\]
may be replaced, schematically, by terms of order $\sqrt{mn}(\uhigh+\ulow)$. A rigorous probabilistic result for the present precision assignment would require a separate analysis, but this viewpoint helps explain why the deterministic bounds can be much more pessimistic than the observed residuals and loss of orthogonality.

In comparison, the mixed-precision analysis of Yang, Fox, and Sanders~\cite[Alg.~7 and eqs.~(4.26)--(4.27)]{YanFS21} studies a different precision assignment. In mpBQR3, each panel is cast to $\uhigh$, the panel QR factorization and WY representation are formed in $\uhigh$, and the WY factors are cast back to $\ulow$ so that the trailing updates can use low-precision matrix products with higher-precision accumulation. If $r$ is the block size and $N=n/r$ is the number of column blocks, their error bound for the computed thin orthogonal factor has the leading form
\[
    \norm{\wh Q_{\mathrm{mpBQR3}}-Q}_F
    \lesssim
    \sqrt{n}\bigl(\widetilde\gamma_N^{(\ell)}
    + n\widetilde\gamma_m^{(h)}\bigr).
\]
Since $\widetilde\gamma_N^{(\ell)}\approx CN\ulow$ and $\widetilde\gamma_m^{(h)}\approx Cm\uhigh$, the low-precision contribution comes mainly from block-level cast-down errors, while the panel factorization contributes through the high-precision term.

Our mpWYQR variant uses $\ulow$ more aggressively: only the two norm evaluations in the reflector construction are carried out in $\uhigh$, while the remaining reflector construction, reflector application, and WY-style updates are governed by $\ulow$. Consequently, the deterministic bounds in Theorems~\ref{thm:mp-householder-stad} and~\ref{thm:mp-householder-loo} do not predict high-precision accuracy; they predict $\ulow$-level accuracy with the norm computations protected from overflow, underflow, and large accumulation errors that can occur in a fully low-precision reflector construction. The comparison therefore highlights a trade-off. mpBQR3 obtains a sharper high-precision panel term by doing substantially more panel work and conversion in $\uhigh$, whereas mpWYQR keeps the high-precision work to the lower-order norm evaluations and aims for a cheaper factorization whose attainable accuracy is still expected to be of order $\ulow$.
\section{Numerical Experiments}\label{sec:exp}
The primary goals of our implementation are to leverage existing LAPACK and BLAS routines, minimize hardware-specific dependencies to enable a native CPU implementation, and achieve performance improvements in large-scale experiments. However, several challenges arise in practice. Standard LAPACK and BLAS libraries do not provide routines for lower-precision arithmetic; mixed-precision routines are largely limited to GPU libraries, and memory bandwidth can become a performance bottleneck.

To overcome these challenges, we build on the work of K{\"o}hler and Benner \cite{KoeBen26}, which provides implementations of fp16 BLAS3 operations as well as mixed-precision LAPACK routines supporting fp16-fp32 precision combination. These routines form the foundation for the mpWYQR implementation.

The implementation is carried out using Intel Math Kernel Library (MKL), which provides native support for fp16 \texttt{GEMM} operations, and is executed on an Intel Xeon Platinum Emerald Rapids Series CPU, which natively supports fp16 arithmetic. This setup lets us evaluate the proposed algorithm on a standard CPU architecture without relying on GPU-specific libraries, making the implementation more broadly accessible. The code for generating the results in this section is available at \cite{KoeOLetal26}.

\subsection{Random well-scaled matrices}
We first test mpWYQR on well-scaled random matrices of various sizes to examine accuracy and performance behavior. We generate the test matrices $A \in \mathbb{R}^{m \times n}$ with uniformly random elements in $[-1, 1]$. All matrices are scaled from the right to prevent overflow and/or underflow. A fixed block size of $n_b = 64$ is used throughout all experiments, and the precision pair $(\ulow, \uhigh) = (\text{fp16}, \text{fp32})$ is used. We ran each experiment 5 times and report the average results throughout this section. 

We compare the proposed algorithm against \texttt{sgeqrt}, which is the standard block Householder QR factorization in fp32, rWYQR, and mpBQR3 in terms of LOO, $\|Q^TQ - I\|_F$, residual, $\|A - QR\|_F$, and runtime.

Table \ref{tab:rand_sqr} shows experiments on square random matrices with sizes ranging from $m=n=1000$ to 45000. For small matrices ($m=n\leq 2000$), we observe that \texttt{sgeqrt} or mpBQR3 can outperform mpWYQR with higher accuracy. However, as the size increases, mpWYQR starts to outperform the other variants, achieving more than $2.4\times$ speedup over \texttt{sgeqrt} and more than $3\times$ speedup over mpBQR3. As a trade-off, mpWYQR provides less accurate results than \texttt{sgeqrt}, as expected. This error difference follows from the error analysis as well: mpWYQR guarantees an error up to $\bigO{\ulow}$, whereas \texttt{sgeqrt} error is bounded by $\bigO{\uhigh} = \bigO{u_{fp32}}\approx 10^{-7}$. Compared with mpBQR2, mpWYQR yields slightly less accurate results, again as expected. However this difference is between $3.36\times 10^{-4}$ and $2.29\times 10^{-5}$, which can be neglected considering that both algorithms are using fp16 as the working precision and unit roundoff of fp16 is $4.88\times 10^{-4}$. Comparing rWYQR with mpWYQR shows the importance of using a high-precision accumulator when computing the norm. Both errors are higher in rWYQR than in mpWYQR. Another important issue is the performance of rWYQR. Although rWYQR uses uniform fp16 and mpWYQR uses fp32 elements, mpWYQR still slightly outperforms rWYQR. This behavior is related to the compiler and hardware properties. Even though rWYQR uses fp16 BLAS routines, the compiler is still underdeveloped for native fp16 operations. The hardware (CPU in this case) supports only basic operations ($+,-,\times,\backslash$), and, unfortunately, performs advanced operations in fp32. These limitations show the importance of mixed precision and can also motivate hardware and compiler development for lower precisions. We expect performance to improve for rWYQR when GPUs with Tensor Cores are used. However, one should note that the error is still expected to be worse than mpWYQR. The dashed lines in rWYQR show algorithm failure due to fp16 overflow.

\begin{table}[h]
\centering 
\begin{adjustbox}{max width=\textwidth}
\begin{tabular}{c c r r r r r r}
\toprule
Method & & $(1000,1000)$ & $(2000,2000)$ & $(4000,4000)$ & $(10000,10000)$ & $(26000,26000)$ & $(45000,45000)$ \\
\midrule
\multirow{3}{*}{sgeqrt} & $\|A-QR\|_F$   & \textbf{7.92003E-07} & \textbf{9.82351E-07} & \textbf{1.27310E-06} & \textbf{1.86841E-06} & \textbf{3.42155E-06} & \textbf{5.54632E-06} \\
                        & $\|Q^TQ-I\|_F$ & \textbf{1.02469E-06} & \textbf{1.25968E-06} & \textbf{1.59754E-06} & \textbf{2.32236E-06} & \textbf{4.17879E-06} & \textbf{6.72147E-06} \\
                        & Time (s)        & 0.19498 & \textbf{0.16725} & 0.35354 & 2.34586 & 24.34429 & 134.03649 \\
\midrule
\multirow{3}{*}{rWYQR} & $\|A-QR\|_F$   & 1.70898E-02 & 3.21960E-02 & 8.55713E-02 & 3.93311E-01 & --- & --- \\
                        & $\|Q^TQ-I\|_F$ & 2.04315E-02 & 3.78113E-02 & 9.95483E-02 & 4.41895E-01 & --- & --- \\
                        & Time (s)        & 0.22967 & 0.22002 & 0.33285 & 1.24347 & --- & --- \\
\midrule
\multirow{3}{*}{mpWYQR} & $\|A-QR\|_F$   & 5.87082E-03 & 5.98907E-03 & 5.99670E-03 & 5.83267E-03 & 4.18854E-03 & 2.92206E-03 \\
                        & $\|Q^TQ-I\|_F$ & 7.68280E-03 & 7.94220E-03 & 7.92694E-03 & 7.79343E-03 & 6.08826E-03 & 5.12695E-03 \\
                        & Time (s)        & 0.11587 & 0.20817 & \textbf{0.31996} & \textbf{1.17185} & \textbf{9.98710} & \textbf{72.41314} \\
\midrule
\multirow{3}{*}{mpBQR3} & $\|A-QR\|_F$   & 5.53513E-03 & 5.84412E-03 & 5.90515E-03 & 5.70679E-03 & 4.31061E-03 & 2.96021E-03 \\
                        & $\|Q^TQ-I\|_F$ & 7.08008E-03 & 7.59888E-03 & 7.70950E-03 & 7.72095E-03 & 6.06537E-03 & 5.05066E-03 \\
                        & Time (s)        & \textbf{0.07670} & 0.21081 & 0.56634 & 3.59873 & 28.64521 & 92.30898 \\
\bottomrule
\end{tabular}
\end{adjustbox}
\caption{Performance and accuracy results for $A\in\mathbb{R}^{m\times n}$, where $m=n$.}
\label{tab:rand_sqr}
\end{table}

Now, we perform the same comparison using random tall-skinny matrices. Table \ref{tab:rand_ts} shows that the behavior of all routines using fp16 changes with tall-skinny matrices. Now, when $m\leq 10000$ or $m\geq 330000$, \texttt{sgeqrt} outperforms the other routines. This is because the MKL fp16 \texttt{GEMM} routine is optimized only for square matrices. Therefore, using fp16 does not provide the expected performance benefit. We observe that \texttt{sgeqrt} can be more than $1.5\times$ faster than mpWYQR, especially when $m$ is small. As $m$ increases, the speedup drops to around $1.06\times$. In contrast, when $10000<m<330000$, mpWYQR achieves more than $5.8\times$ speedup over \texttt{sgeqrt}. The comparison between mpBQR3 and mpWYQR, however, remains similar to the square matrices case. mpWYQR outperforms mpBQR3 for all experiments, achieving up to $9\times$ speedup with at  most $1.67 \times 10^{-5}$ difference in accuracy. Last but not least, we observe the effect of fp32 norm computation even for $m=40000$, when rWYQR fails due to overflow and mpWYQR continues.

\begin{table}[h]
\centering 
\begin{adjustbox}{max width=\textwidth}
\begin{tabular}{c c r r r r r r}
\toprule
Method & & $(10000,128)$ & $(20000,128)$ & $(40000,128)$ & $(100000,128)$ & $(320000,128)$ & $(330000,128)$ \\
\midrule
\multirow{3}{*}{sgeqrt} & $\|A-QR\|_F$   & \textbf{2.76481E-07} & \textbf{2.98540E-07} & \textbf{3.59624E-07} & \textbf{5.89988E-07} & \textbf{1.89635E-06} & \textbf{1.96205E-06} \\
                        & $\|Q^TQ-I\|_F$ & \textbf{1.76102E-07} & \textbf{2.01938E-07} & \textbf{2.43083E-07} & \textbf{4.36624E-07} & \textbf{1.35620E-06} & \textbf{1.39062E-06} \\
                        & Time (s)        & \textbf{0.04000} & 0.09250 & 0.08043 & 0.25604 & 0.19918 & \textbf{0.14789} \\
\midrule
\multirow{3}{*}{rWYQR} & $\|A-QR\|_F$   & 6.62231E-02 & 2.82715E-01 & -- & -- & -- & -- \\
                        & $\|Q^TQ-I\|_F$ & 4.08630E-02 & 1.14807E-01 & -- & -- & -- & -- \\
                        & Time (s)        & 0.06461 & 0.02027 & -- & -- & -- & -- \\
\midrule
\multirow{3}{*}{mpWYQR} & $\|A-QR\|_F$   & 1.10340E-03 & 8.78811E-04 & 7.93457E-04 & 7.59125E-04 & 7.36237E-04 & 6.69003E-04 \\
                        & $\|Q^TQ-I\|_F$ & 1.82986E-04 & 1.45435E-04 & 9.25660E-05 & 7.43270E-05 & 5.67436E-05 & 7.03335E-05 \\
                        & Time (s)        & 0.06049 & \textbf{0.01582} & \textbf{0.02771} & \textbf{0.05797} & \textbf{0.16621} & 0.15726 \\
\midrule
\multirow{3}{*}{mpBQR3} & $\|A-QR\|_F$   & 9.64642E-04 & 7.63416E-04 & 6.63757E-04 & 6.51836E-04 & 6.09398E-04 & 6.52313E-04 \\
                        & $\|Q^TQ-I\|_F$ & 1.90377E-04 & 1.46985E-04 & 8.95858E-05 & 7.54595E-05 & 5.94854E-05 & 7.33137E-05 \\
                        & Time (s)        & 0.06201 & 0.14174 & 0.17388 & 0.44792 & 0.82904 & 0.79343 \\
\bottomrule
\end{tabular}
\end{adjustbox}
\vspace{0.1cm}
\caption{Performance and accuracy results for $A\in\mathbb{R}^{m\times n}$ with $m>n=128$.}
\label{tab:rand_ts}
\end{table}

\subsection{Special block matrix classes}

We now run several numerical experiments to compare the performance and accuracy of the four routines using three matrix classes available in the \texttt{BlockStab} library \cite{LunOCetal24}: \texttt{glued}, \texttt{monomial}, and \texttt{piled}. Each matrix class is created using dimensional inputs $m,p,s$, where $m$ denotes the number of rows, $p$ denotes the number of block vectors, and $s$ denotes the number of columns in each block vector. 
We set $m = 100$, $p = 10$, and $s=10$ for \texttt{glued} matrices; $m = 1200$, $p = 120$, $s = 2:2:12$ for \texttt{monomial} matrices; and $m = 1000:1000:10000$, $p = 10$, $s = 10$ for \texttt{piled} matrices. We ran each experiment 5 times and present the average results in Figures \ref{fig:heatmaps_glued}--\ref{fig:heatmaps_piled}. 

We observe that mpWYQR and mpBQR3 have similar LOO, with mpWYQR yielding slightly worse LOO, especially when $m$ is small. However, this difference appears only in the fourth significant digit and is therefore not significant. The relative error behaves similarly, especially as the condition number increases, but as with the LOO, the difference is negligible. In contrast, mpWYQR is up to $18\times$ faster than mpBQR3 (Figure~\ref{fig:heatmaps_monomial}). Across all the figures, mpWYQR performs best when $m$ is moderately small (around 3000), regardless of the condition number, and relatively worse when $m$ is even smaller. 

As expected, sgeqrt yields the best LOO and relative error for the \texttt{glued} matrices in Figure~\ref{fig:heatmaps_glued}. Although we cannot explain the scattered high-error yellow boxes in these two maps, the absolute differences are less than $10^{-6}$ and can be ignored. We also point out the effect of fp16 on timing by comparing the maps of rWYQR and mpWYQR. This effect becomes more visible as the system grows moderately large. In particular, when $m \geq 8000$, comparing the map of mpBQR3 with those of rWYQR and mpWYQR shows the impact of using fp16 in inner products.

Figure~\ref{fig:heatmaps_monomial} most clearly shows why computing norms in fp32 matters. Once the system is sufficiently large, rWYQR fails to compute the QR decomposition due to overflow or underflow, which appears as white boxes in the map. In contrast, mpWYQR succeeds even when $m$ is very large and yields LOO and relative error similar to those of mpBQR3. This figure also shows the largest performance difference between mpBQR3 and mpWYQR, which occurs when the system is very large and $s$ is sufficiently small.

Figure~\ref{fig:heatmaps_piled} also shows a significant performance gain for mpWYQR over mpBQR3, with negligible changes in LOO and relative error. These results are consistent with our theory, which shows that the error bounds of mpBQR3 and mpWYQR differ only by a constant. However, since both algorithms heuristically have the same error bound and the differences in the results are negligible, we argue that our theory provides a worst-case bound, and that the constant difference does not appear in practice.

\begin{figure}[htbp]
    \centering
    \includegraphics[width=0.354\textwidth, trim=20 0 8 0, clip]{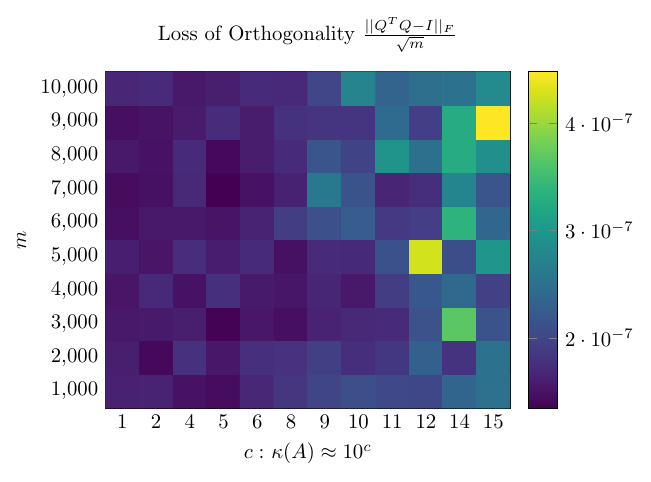}\hfill
    \includegraphics[width=0.317\textwidth, trim=50 0 8 0, clip]{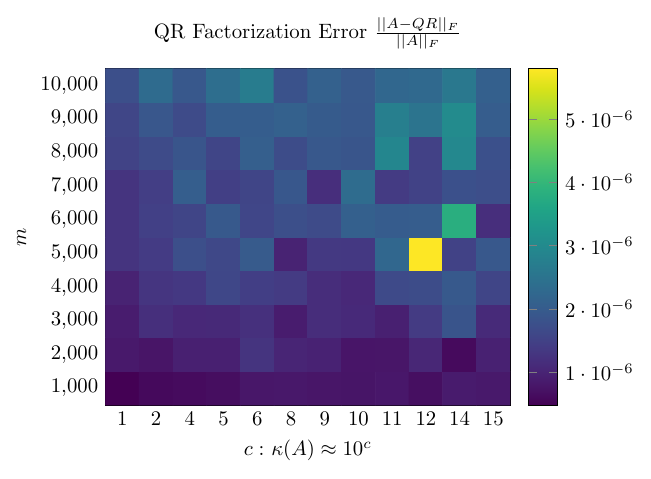}\hfill
    \includegraphics[width=0.32\textwidth, trim=50 0 8 0, clip]{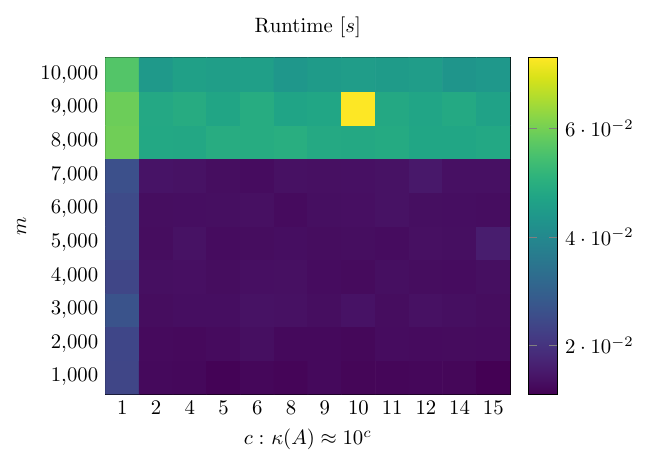}

    \vspace{0.5em}
    \includegraphics[width=0.354\textwidth, trim=20 0 32 0, clip]{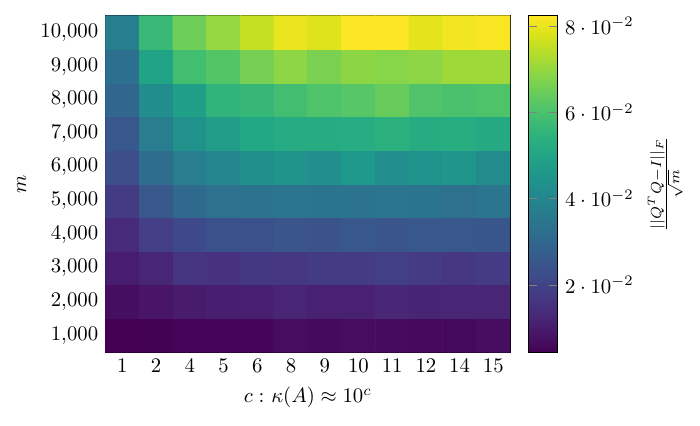}\hfill
    \includegraphics[width=0.317\textwidth, trim=50 0 31 0, clip]{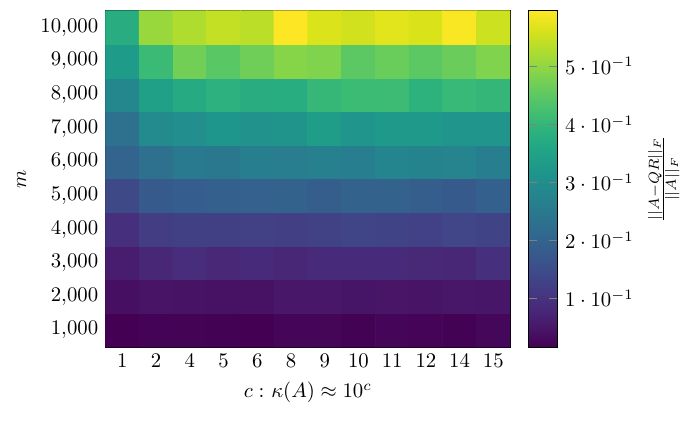}\hfill
    \includegraphics[width=0.32\textwidth, trim=50 0 25 0, clip]{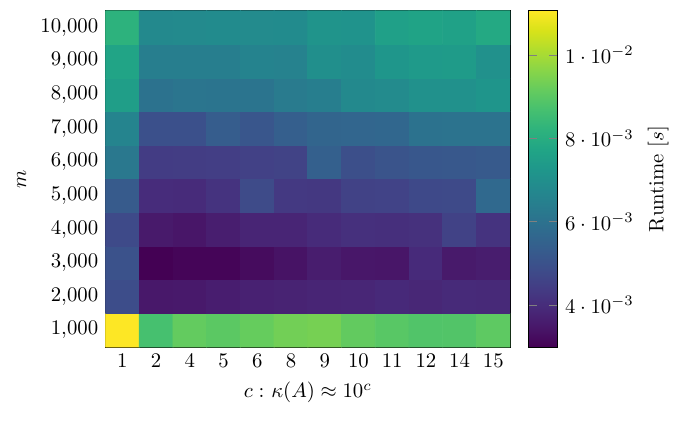}

    \vspace{0.5em}
    \includegraphics[width=0.354\textwidth, trim=20 0 32 0, clip]{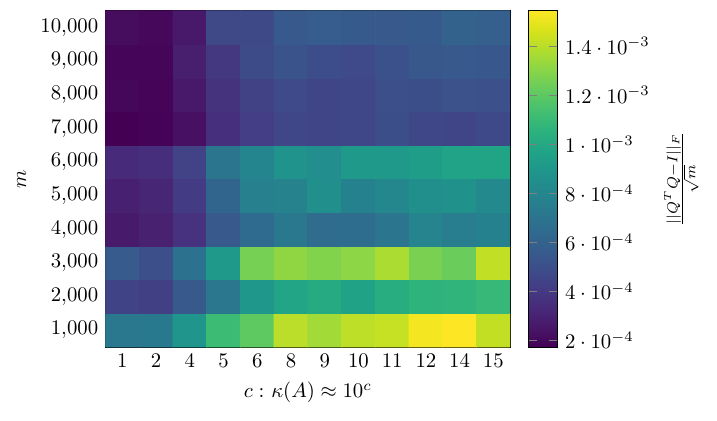}\hfill
    \includegraphics[width=0.317\textwidth, trim=50 0 31 0, clip]{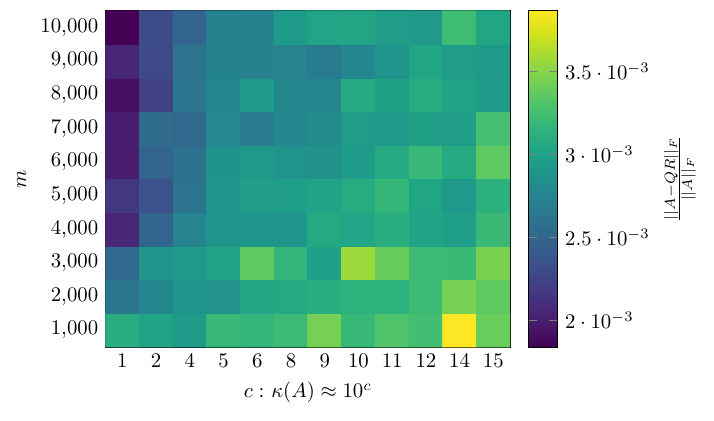}\hfill
    \includegraphics[width=0.32\textwidth, trim=50 0 25 0, clip]{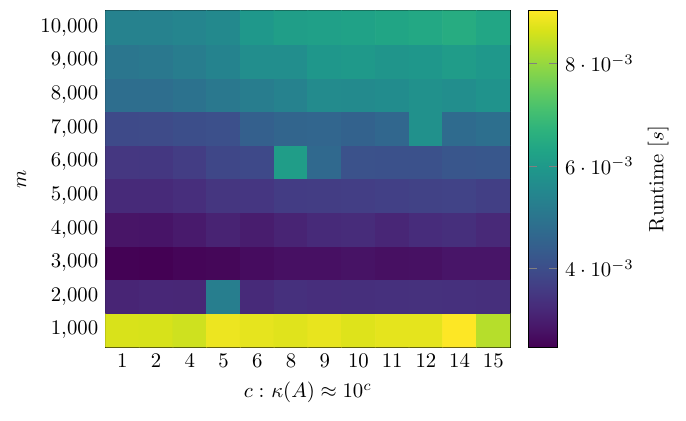}

    \vspace{0.5em}
    \includegraphics[width=0.354\textwidth, trim=20 0 32 0, clip]{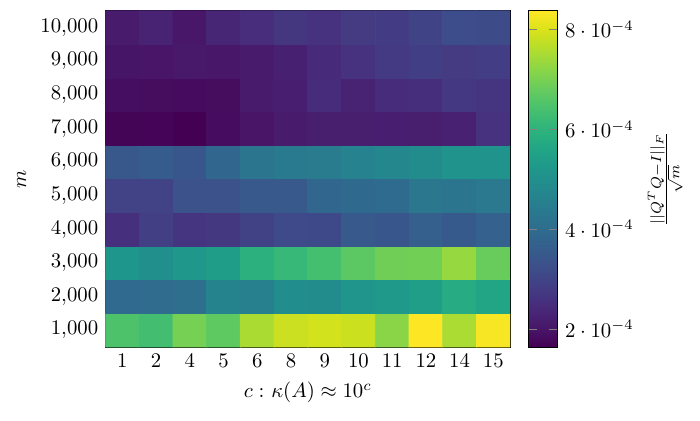}\hfill
    \includegraphics[width=0.317\textwidth, trim=50 0 31 0, clip]{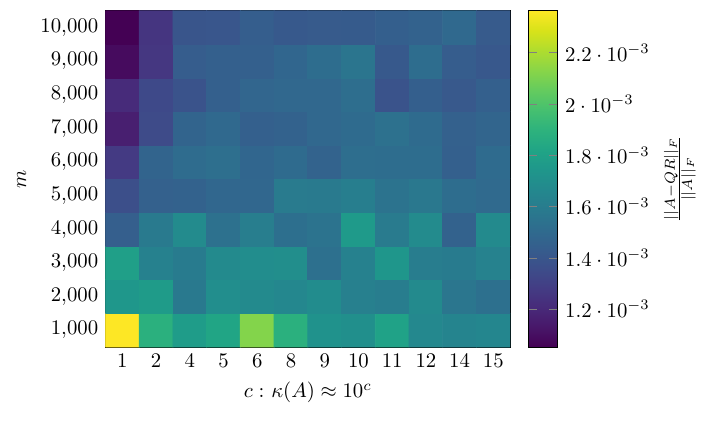}\hfill
    \includegraphics[width=0.32\textwidth, trim=50 0 25 0, clip]{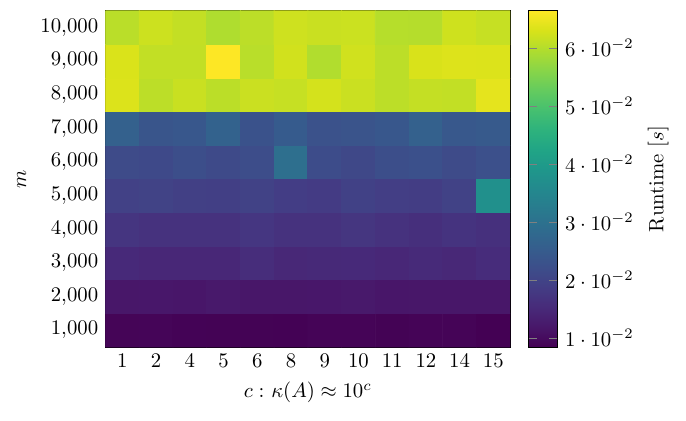}

    \caption{Loss of orthogonality (left), relative QR factorization error (middle), and runtime (right) for \texttt{glued} matrices with varying condition number and number of rows~$m$. From top to bottom, the rows correspond to sgeqrt, rWYQR, mpWYQR, and mpBQR3.}
    \label{fig:heatmaps_glued}
\end{figure}

\begin{figure}[htbp]
    \centering
    \includegraphics[width=0.354\textwidth, trim=20 0 32 0, clip]{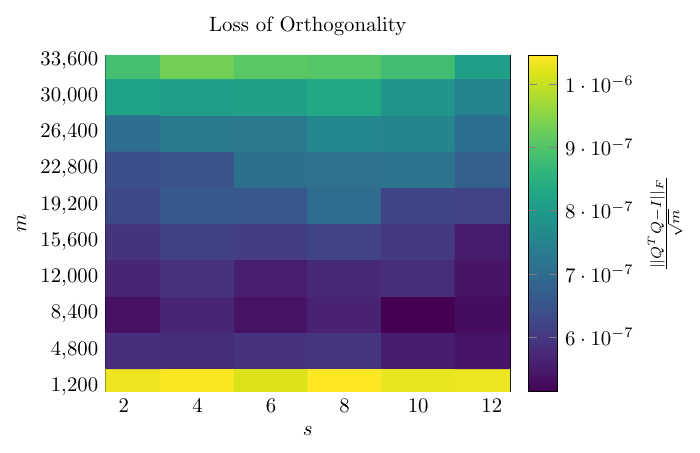}\hfill
    \includegraphics[width=0.317\textwidth, trim=50 0 8 0, clip]{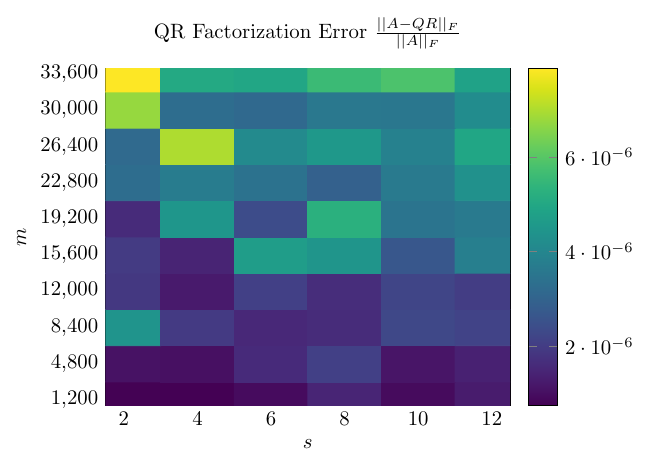}\hfill
    \includegraphics[width=0.32\textwidth, trim=50 0 8 0, clip]{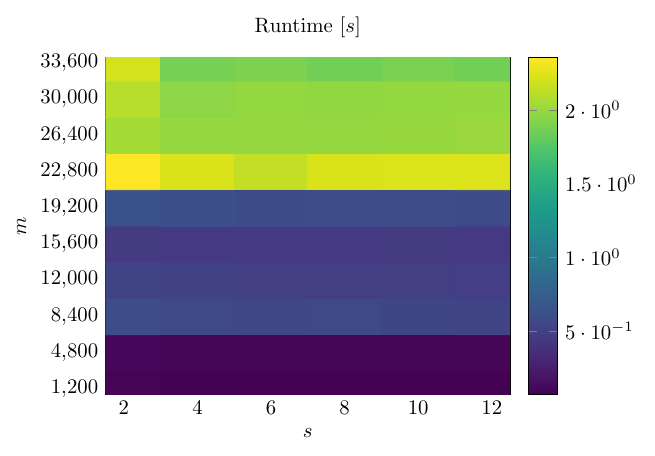}

    \vspace{0.5em}
    \includegraphics[width=0.354\textwidth, trim=20 0 32 0, clip]{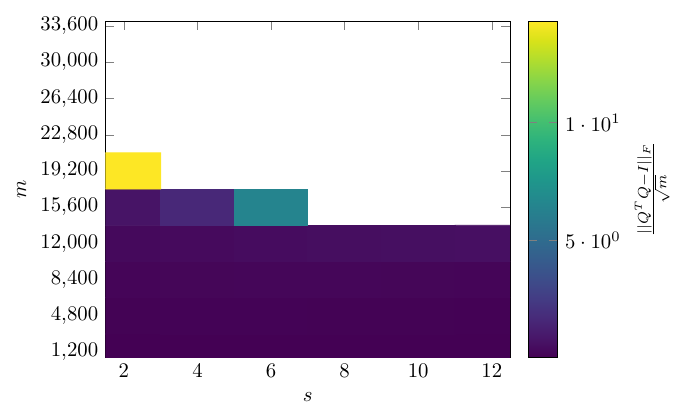}\hfill
    \includegraphics[width=0.317\textwidth, trim=50 0 31 0, clip]{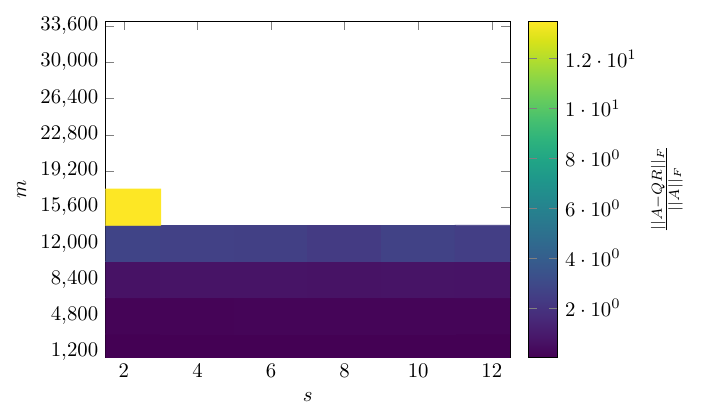}\hfill
    \includegraphics[width=0.32\textwidth, trim=50 0 25 0, clip]{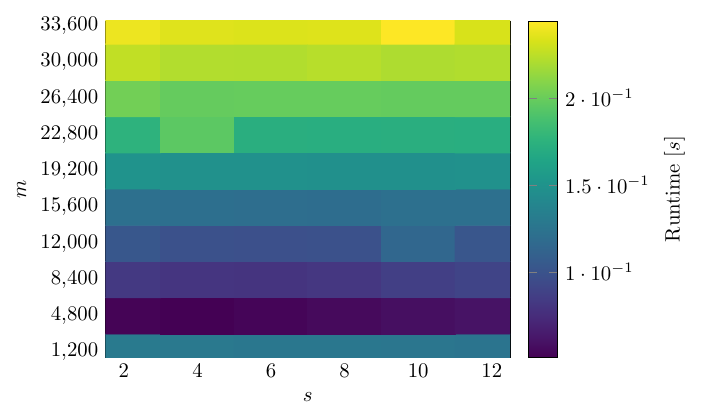}

    \vspace{0.5em}
    \includegraphics[width=0.354\textwidth, trim=20 0 32 0, clip]{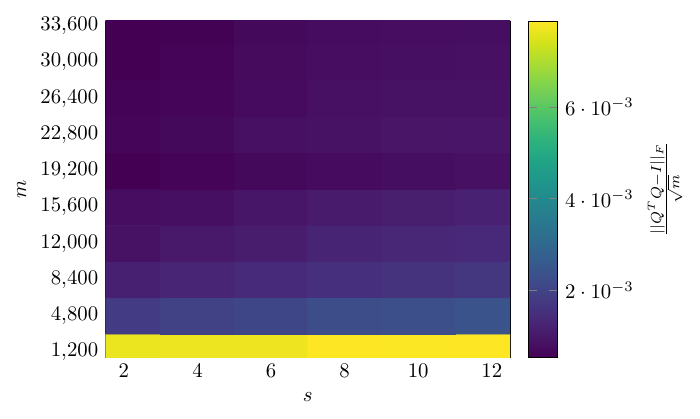}\hfill
    \includegraphics[width=0.317\textwidth, trim=50 0 31 0, clip]{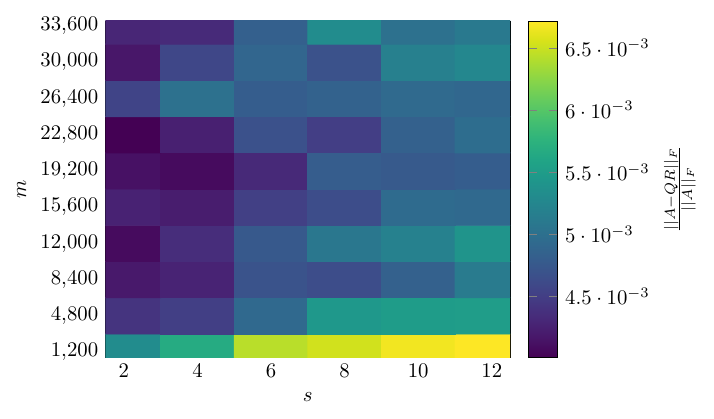}\hfill
    \includegraphics[width=0.32\textwidth, trim=50 0 25 0, clip]{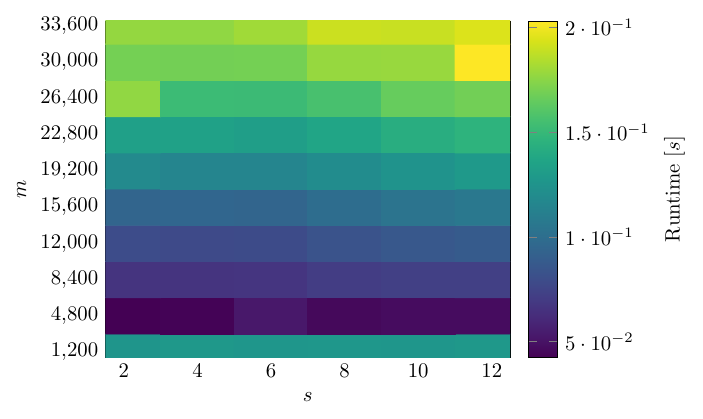}

    \vspace{0.5em}
    \includegraphics[width=0.354\textwidth, trim=20 0 32 0, clip]{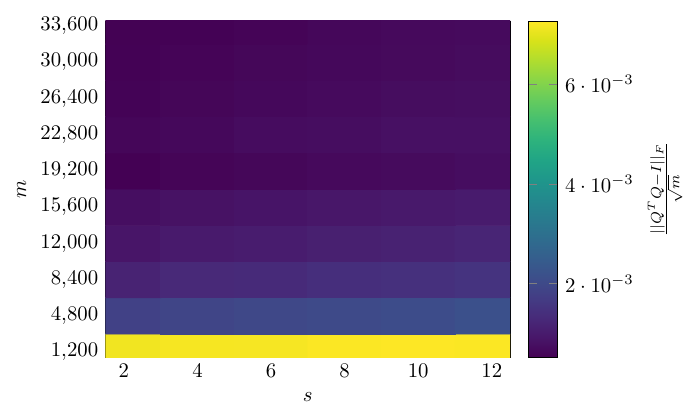}\hfill
    \includegraphics[width=0.317\textwidth, trim=50 0 31 0, clip]{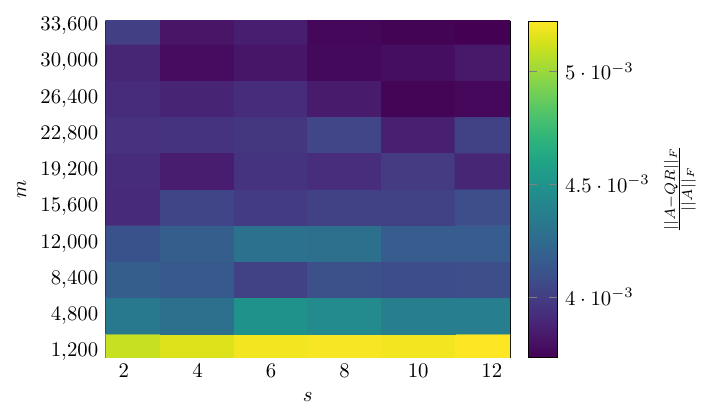}\hfill
    \includegraphics[width=0.32\textwidth, trim=50 0 25 0, clip]{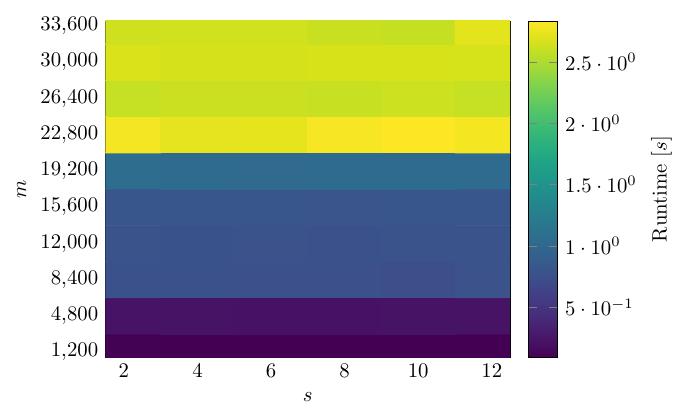}

    \caption{Loss of orthogonality (left), relative QR factorization error (middle), and runtime (right) for \texttt{monomial} matrices with varying block size $s$ and number of rows~$m$. From top to bottom, the rows correspond to sgeqrt, rWYQR, mpWYQR, and mpBQR3. The white areas appear when the result is NaN or Infinity.}
    \label{fig:heatmaps_monomial}
\end{figure}

\begin{figure}[htbp]
    \centering
    \includegraphics[width=0.35\textwidth, trim=21 0 30 0, clip]{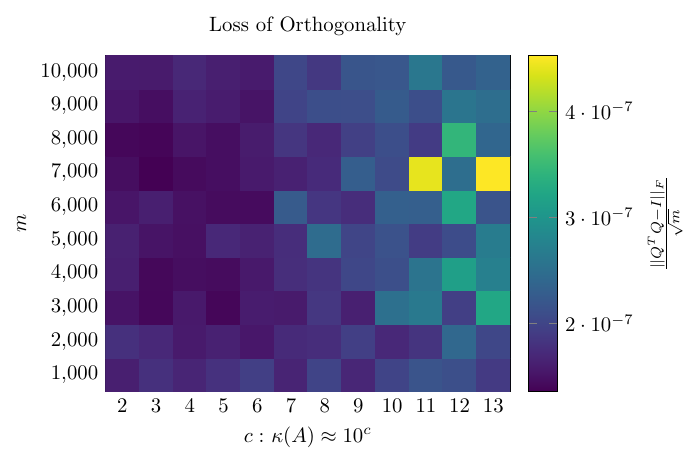}\hfill
    \includegraphics[width=0.325\textwidth, trim=50 0 8 0, clip]{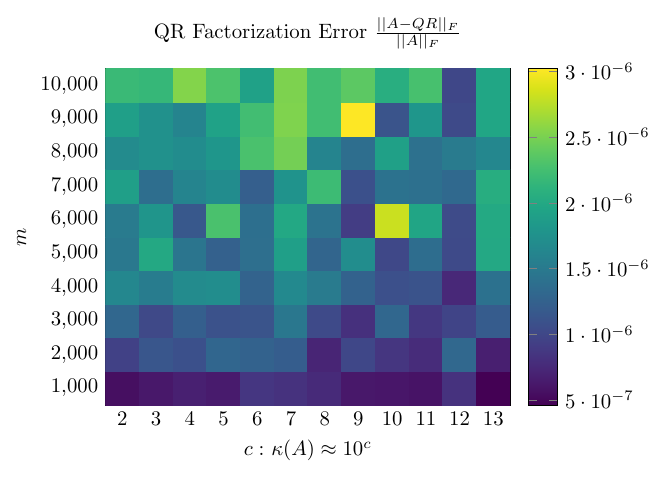}\hfill
    \includegraphics[width=0.315\textwidth, trim=50 0 8 0, clip]{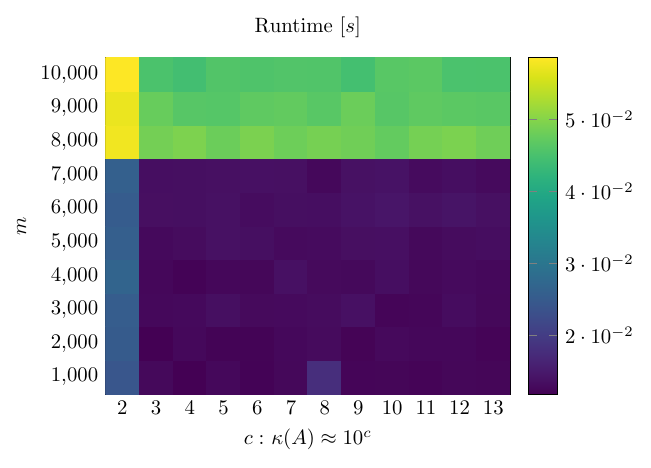}

    \vspace{0.5em}
    \includegraphics[width=0.35\textwidth, trim=21 0 33 0, clip]{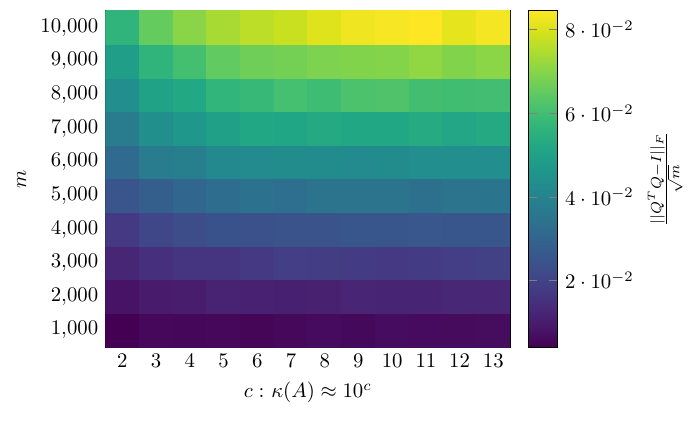}\hfill
    \includegraphics[width=0.325\textwidth, trim=50 0 31 0, clip]{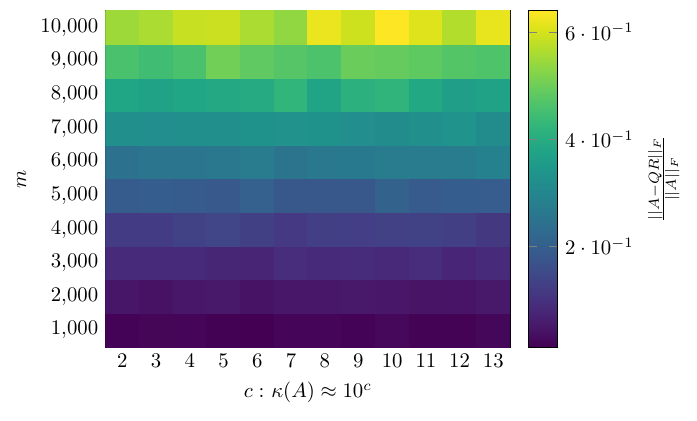}\hfill
    \includegraphics[width=0.315\textwidth, trim=50 0 25 0, clip]{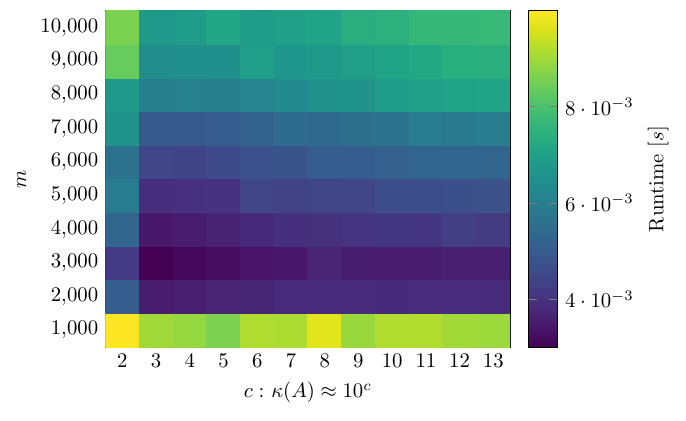}

    \vspace{0.5em}
    \includegraphics[width=0.35\textwidth, trim=21 0 33 0, clip]{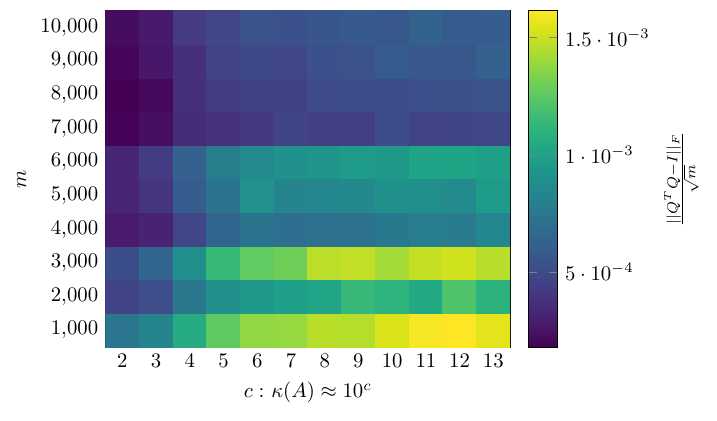}\hfill
    \includegraphics[width=0.325\textwidth, trim=50 0 31 0, clip]{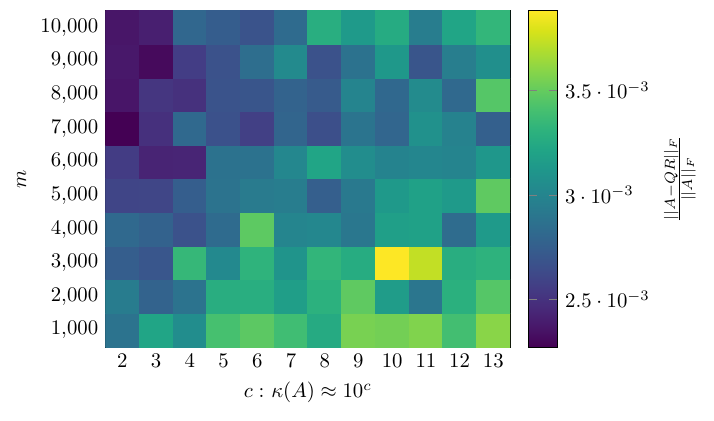}\hfill
    \includegraphics[width=0.315\textwidth, trim=50 0 25 0, clip]{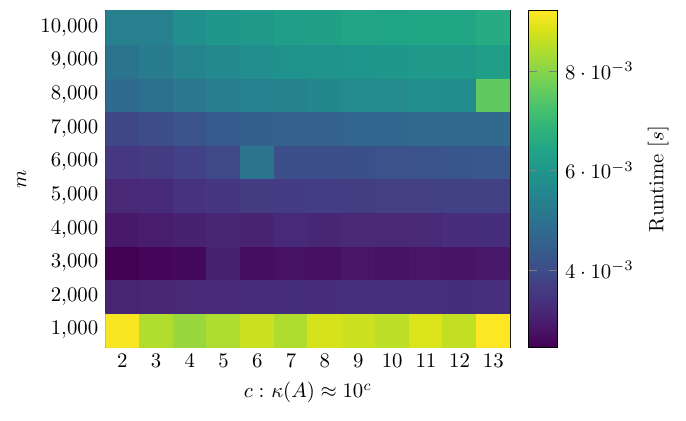}

    \vspace{0.5em}
    \includegraphics[width=0.35\textwidth, trim=21 0 33 0, clip]{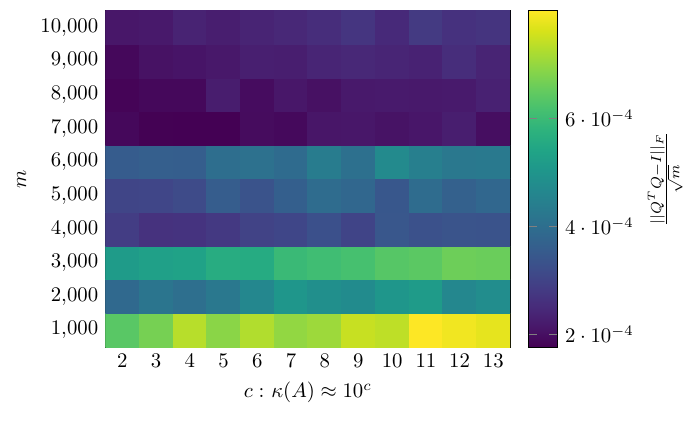}\hfill
    \includegraphics[width=0.325\textwidth, trim=50 0 31 0, clip]{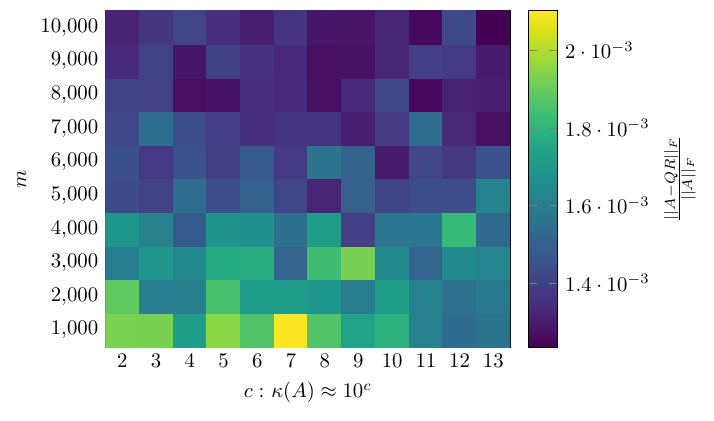}\hfill
    \includegraphics[width=0.315\textwidth, trim=50 0 25 0, clip]{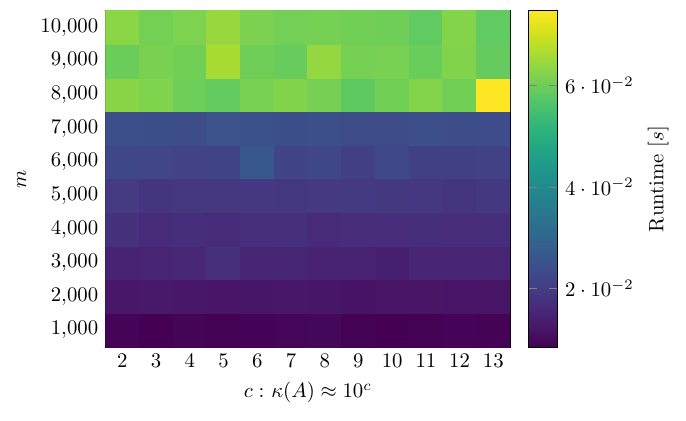}

    \caption{Loss of orthogonality (left), relative QR factorization error (middle), and runtime (right) for \texttt{piled} matrices with varying condition number and number of rows~$m$. From top to bottom, the rows correspond to sgeqrt, rWYQR, mpWYQR, and mpBQR3.}
    \label{fig:heatmaps_piled}
\end{figure}
\section{Conclusion} \label{sec:conclusion}

In this paper, we introduced a mixed-precision block Householder QR algorithm with WY representation, mpWYQR, and compared it with mpBQR3 introduced in \cite{YanFS21}. We also compared it with the uniform fp16 variant of mpWYQR, namely rWYQR, to show the importance of using fp32 accumulators in norm computations rather than using fp16 throughout the entire algorithm. In addition, we included sgeqrt as a control to show timing changes and compared it with the other algorithms. In all cases, mpBQR3 and mpWYQR behave similarly in terms of accuracy, while mpWYQR can provide significant speedup.

The accuracy of mpWYQR still depends on $u_l$, while the algorithm has fewer performance restrictions since it requires fewer precision conversions and performs more inner products in fp16, which increases the error bound by a constant. However, we showed that this constant does not significantly affect the relative error and loss of orthogonality compared with mpBQR3 in the numerical experiments, since both algorithms share the same heuristic bounds.


Our numerical experiments show that, on CPUs, the performance of our variants is currently limited by library support, since only GEMM is natively supported in fp16. With broader fp16 support in numerical libraries, our variants and fp16 algorithms in general can achieve better performance on CPUs. As future work, we plan to develop a GPU implementation of mpWYQR, which we anticipate will achieve comparable, if not better, performance while maintaining comparable accuracy.

\section*{Acknowledgments}%
\addcontentsline{toc}{section}{Acknowledgments}
Anthropic was used to structure the first draft of Sections 1-3. These sections were reviewed by all authors and substantially changed. No new ideas were contributed by Claude, and it was not used to generate or suggest a literature review or any references. All ideas in this paper belong to the authors, and we take full responsibility for the content.
E.O. is funded by the German Research Foundation (DFG) through the research training group 2297 “MathCoRe”, Magdeburg.

\addcontentsline{toc}{section}{References}

\bibliographystyle{abbrvurl}
\bibliography{references} 

\end{document}